\documentclass[12pt]{amsart}

\usepackage[usenames,dvipsnames]{xcolor}
\usepackage{fancyhdr}
\pagestyle{fancyplain}

\usepackage{amsmath,amsthm,amsfonts,graphicx,amssymb,amscd}
\usepackage[all,cmtip]{xy}
\usepackage{graphicx, overpic, float}
\usepackage[mathscr]{euscript}
\usepackage{hyperref}
\usepackage{enumerate}

\newtheorem{thm}{Theorem}[section]

\newtheorem{theorem}[thm]{Theorem}

\newtheorem{lem}[thm]{Lemma}
\newtheorem{lemma}[thm]{Lemma}
\newtheorem{prop}[thm]{Proposition}
\newtheorem{proposition}[thm]{Proposition}
\theoremstyle{definition}
\newtheorem{defn}[thm]{Definition}
\newtheorem{definition}[thm]{Definition}

\setcounter{tocdepth}{1}
\headheight=0.0in
\headsep 0.4in
\textwidth=5.75in
\oddsidemargin=0.5in
\evensidemargin=0.5in

\newcommand{\ML}{\mathscr{ML}}

\newcommand{\GL}{\mathcal{GL}}


\newcommand{\RR}{\mathcal{R}}
\newcommand{\Gr}{\operatorname{Gr}}
\newcommand{\gr}{\operatorname{gr}}

\newcommand{\T}{\mathscr{T}}
\newcommand{\TT}{\mathcal{T}}
\newcommand{\TTT}{\mathsf{T}}
\renewcommand{\P}{\mathscr{P}}
\newcommand{\PP}{\mathcal{P}}
\renewcommand{\AA}{\mathcal{A}}
\newcommand{\AAA}{\mathsf{A}}
\newcommand{\FF}{\mathcal{F}}
\newcommand{\LL}{\mathcal{L}}

\newcommand{\NNN}{\mathsf{N}}
\newcommand{\Z}{\mathbb{Z}}
\newcommand{\R}{\mathbb{R}}
\newcommand{\RRR}{\mathsf{R}}
\newcommand{\C}{\mathbb{C}}
\renewcommand{\H}{\mathbb{H}} 
\newcommand{\h}{\mathbb{H}} 
\newcommand{\RS}{{\hat{\mathbb{C}}}}

\newcommand{\rs}{\hat{\mathbb{C}}}

\newcommand{\M}{\mathscr{M}}

\newcommand{\hol}{{\rm hol}}

\newcommand{\PSL}{\operatorname{PSL_2(\mathbb{C})}}

\newcommand{\PSLr}{\operatorname{PSL(2, \mathbb{R})}}

\newcommand{\Conv}{\operatorname{Conv}}

\newcommand{\length}{\operatorname{length}}

\newcommand{\bdr}{\partial}

\newcommand{\del}{\delta}
\newcommand{\ep}{\epsilon}
\newcommand{\lam}{\lambda}
\newcommand{\kap}{\kappa}
\newcommand{\al}{\alpha}
\newcommand{\gam}{\gamma}

\newcommand{\col}{\colon}
\newcommand{\ci}{\circ}
\newcommand{\sub}{\subset}

\usepackage{fancybox}
\newcommand{\minus}{\setminus}

\newcommand{\til}{\tilde}


\newcommand{\Label}[1]{\label{#1}\textcolor{green}{\tiny #1} }
\renewcommand{\Label}[1]{\label{#1}}

\newcommand{\Proof}{{\noindent \it Proof. }}
\newcommand{\Qed}[1]{\nopagebreak[4]{\tiny \hfill
\fbox{\ref{#1}} \linebreak }\pagebreak[2]}

\newcommand{\bd}{\partial}

\pagestyle{myheadings} 

\renewcommand{\emph}[1]{{\it #1}}

\usepackage{color}

\markright{\today}

\title[Holonomy map fibers of $\C P^1$-structures in Moduli space]{Holonomy map fibers of $\C P^1$-structures\\  in Moduli space}

\author{Shinpei Baba}
\address{Universit\"at Heidelberg}
\email{shinpei@mathi.uni-heidelberg.de}

\author{Subhojoy Gupta}
\address{Center for Quantum Geometry of Moduli Spaces}
\email{sgupta@qgm.au.dk}

\date{\today}

\begin{document}
\maketitle
\begin{abstract}
Let $S$ be a closed oriented surface of genus $g\geq 2$.  Fix an arbitrary non-elementary representation $\rho\col\pi_1(S)\to {\rm SL}_2(\C)$  and consider all marked (complex) projective structures on $S$ with holonomy $\rho$. 
 We show that  their underlying conformal structures are dense in the moduli space of $S$. 
\end{abstract}

\section{Introduction}

Let $F$ be an oriented connected surface. 
 A {\it (complex) projective structure} on $F$ is a maximal atlas modeled on the Riemann sphere $\RS$ with transition maps in $\PSL$.
 Then each projective structure enjoys a conformal structure induced by $\RS$. 
 Letting $\til{F}$ be the universal cover of $F$, a projective structure is equivalently given as a pair of  an immersion $f:\widetilde{F}\to \RS$ (\textit{developing map}) and a homomorphism $\rho:\pi_1(F)\to \PSL$  (\textit{holonomy}) such that $f$ is $\rho$-equivariant. 

Let $S$ be a closed oriented surface of genus $g\geq 2$. 
Let $\mathscr{P}$ be  the space of all marked projective structures on $S$, which is diffeomorphic to $\C^{6g-6}$.
Then let $\chi$ be the component of the  $\PSL$-character variety of $\pi_1(S)$ containing the representations $\pi_1(S) \to \PSL$ lifting to  $\pi_1(S) \to {\rm SL}_2(\C)$ (c.f. \cite{Goldman-88t}).
The {\it holonomy map}
\begin{equation*}
{\rm hol}:\mathscr{P}\to \chi
 \end{equation*}
is the natural forgetful map taking projective structures to their holonomy representations.\\

A subgroup of $\PSL$ is {\it elementary} if its action preserves a single point or two points  in $\H^3 \cup \RS$.
Then the image of $\hol$ consists of  all representations in $\chi$ whose images are {\it not} elementary (\cite{Gallo-Kapovich-Marden}).
Note that ${\rm Im}(\hol)$ contains many nondiscrete representations.
A basic question  is to understand the {\it holonomy fiber} $\hol^{-1}(\rho) =: \mathscr{P}_\rho$ in $\mathscr{P}$ (\cite{Goldman-thesis, Kapovich-95, Gallo-Kapovich-Marden, Dumas-08}). 
In particular \cite[p 274]{Hubbard-81} asked what  $\P_\rho$ looks like in the Teichm\"{u}ller space $\T$ by the projection, $p \col \P \to \T$, from projective structures
to their underlying conformal structures.
Note that  $\P$ is a complex vector bundle over $\T$ by $p$.

In this paper, we consider its further projection into the moduli space $\mathscr{M}$ of Riemann surfaces,  given by  the action of the mapping class group of $S$ on $\T$, 
\begin{equation*} 
\mathscr{P} \xrightarrow{p} \mathscr{T} \xrightarrow{\pi} \mathscr{M}.
\end{equation*}
\vspace{.1in}
We shall prove

\begin{thm}\label{thm:thm1}
Let $\rho:\pi_1(S)\to \PSL$ be any non-elementary homomorphism that lifts to $\pi_1(S) \to {\rm SL}(2, \C)$.
 Then the holonomy fiber $\mathscr{P}_\rho$ projects onto a dense subset of the moduli space $\M$. 
\end{thm}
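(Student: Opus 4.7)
The plan is to exploit the $2\pi$-grafting operation --- which preserves holonomy --- to produce a rich family of elements of $\P_\rho$ from a single basepoint, and then to deduce density in $\M$.

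By Gallo--Kapovich--Marden, since $\rho$ is non-elementary and lifts to $\mathrm{SL}_2(\C)$, the fiber $\P_\rho$ is nonempty; fix $C_0 \in \P_\rho$ and set $X_0 := p(C_0) \in \T$. For each simple closed curve $\gamma$ admissible for $C_0$ and each integer $n \geq 1$, the $2\pi n$-grafting $\Gr_{2\pi n \gamma} C_0$ again lies in $\P_\rho$; under $p$ its image is obtained from $X_0$ by Thurston's conformal grafting along $\gamma$, inserting a flat cylinder whose modulus grows linearly with $n$. The same construction applies to admissible integer-weighted multicurves, producing a countable family $\mathcal{X}_\rho := \{\gr_{2\pi\lambda} X_0\} \subset \T$ indexed by admissible integer-weighted multicurves $\lambda$, whose images descend into $\pi(p(\P_\rho)) \subset \M$.

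To secure a rich enough supply of admissible multicurves, I would invoke the first author's earlier structural results for $\P_\rho$ in the non-elementary setting: by replacing $C_0$ within $\P_\rho$ by a suitable grafted variant, we may arrange that a cofinal family of isotopy classes of simple closed curves becomes admissible --- mimicking the Fuchsian case, where every simple closed curve is admissible at the uniformizing projective structure.

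The main step is then to show that $\pi(\mathcal{X}_\rho) \subset \M$ is dense. Since the Thurston grafting map $\gr : \ML(S) \times \T \to \T$ is a homeomorphism, density in $\M$ reduces to showing that the admissible integer-weighted multicurves, considered modulo $\mathrm{MCG}(S)$, suffice to approximate the $\mathrm{MCG}$-class of any element of $\ML(S)$. The principal obstacle is that integer-weighted multicurves form a discrete subset of $\ML(S)$, so density in $\T$ itself fails --- all density in $\M$ must come from the mapping-class-group quotient. I expect the hardest part of the proof to combine (i) the minimality of the $\mathrm{MCG}$-action on $\PML$, giving dense orbits of multicurve isotopy classes projectively, (ii) asymptotic properties of $\gr_\lambda X$ as the weight of $\lambda$ grows --- long cylinders open up and push the conformal structure toward $\partial\overline{\M}$ --- and (iii) a coordinated choice of combinatorial type and integer weight to land inside any prescribed open set of $\M$.
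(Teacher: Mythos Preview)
There is a concrete error at the very start of your argument. You claim that for an admissible simple closed curve $\gamma$ on $C_0$, the image $p(\Gr_{2\pi n\gamma}C_0)$ equals the conformal grafting $\gr_{2\pi n\gamma}(X_0)$ of $X_0=p(C_0)$. This is only true in the Fuchsian case, where $C_0$ is itself a hyperbolic structure. For a general projective structure $C_0\cong(\tau,L)$ in Thurston coordinates, the operation $\Gr_\gamma$ inserts a cylinder coming from the Hopf torus of $\rho(\gamma)$, not a Euclidean cylinder glued into the hyperbolic surface $X_0$; the resulting conformal structure depends on the full projective structure $C_0$, not merely on $X_0$. Concretely, the underlying Riemann surface of $\Gr_N(C_0)$ is close to $\gr_{L+N}(\tau)$, \emph{not} to $\gr_N(X_0)$. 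Establishing this approximation is precisely the content of Theorem~\ref{t:EpDense} and occupies most of the paper: one must build nearly circular fat traintracks on $C_0$ supported on round cylinders, transfer them to $\Gr_N(C_0)$ and $\Gr_{tM}(\tau)$, and compare via carefully constructed bilipschitz maps. Your outline skips this entirely by an incorrect identification.

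Your density reduction also fails as stated. You write that density in $\M$ reduces to showing that integer-weighted multicurves, modulo $\mathrm{MCG}(S)$, approximate every $\mathrm{MCG}$-class in $\ML$. But the grafting map is equivariant only in the form $\gr_{\phi_*\lambda}(\phi_*X)=\phi_*\gr_\lambda(X)$: moving $\lambda$ by a mapping class moves the basepoint $X$ as well, so you cannot fix $X_0$ and vary $\lambda$ within an $\mathrm{MCG}$-orbit to sweep out $\M$. The paper instead uses a different mechanism: it invokes the result (Theorem~\ref{thm:dense}) that for generic $M\in\ML$ the single grafting ray $t\mapsto\gr_{tM}(\tau)$ is already dense in $\M$, which comes from the asymptoticity of grafting rays to Teichm\"uller geodesics together with Masur's ergodicity. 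One then approximates points on this dense ray by admissible graftings of $C_0$ via Theorem~\ref{t:EpDense}, which is where all the geometric work lies.
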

\vspace{.1in}

 In contrast $\P_\rho$ is a discrete subset of $\mathscr{P}\, (\cong \C^{6g-6})$  since $\hol$ is a local homeomorphism (\cite{Hejhal-75}).
 Furthermore $\mathscr{P}_\rho$ injectively  maps onto a \textit{discrete} subset of  $\mathscr{T}$ by $p$.
Indeed, for any compact set $K \subset \mathscr{T}$, the restriction of  $\hol$ to $p^{-1}(K)$ is proper (see Theorem 3.2 of \cite{Tanigawa99}). 
Therefore $\mathscr{P}_\rho \cap p^{-1}(K)$ is a finite set in $\P$.
Then its projection into $\T$ is $p(\mathscr{P}_\rho)\cap K$ and hence it is discrete. 
For $X \in \T$,  let $\P_X$ be the fiber $p^{-1}(X)$ of $p$ over $X$, which is a complex vector space of dimension $3g-3$. 
Then $\hol$ properly embeds $\P_X$ into $\chi$ (\cite{Gallo-Kapovich-Marden}), which implies the injectivity of $p | \P_\rho$.

The proof of Theorem \ref{thm:thm1} utilizes the  operation of projective structures, called \textit{grafting}: given a hyperbolic surface $\tau\in \mathscr{T}$ and a {measured geodesic lamination} $L$, grafting yields a new projective surface $Gr_L (\tau)$. If $L$ is a multiloop, it inserts projective annuli along the components of $L$, where the heights of the annulus are given by weights. 
 This gives a geometric parameterization $\mathcal{P} \cong \mathscr{T}\times \mathscr{ML}$, called  {\it Thurston coordinates} (see \cite{Kamishima-Tan-92, Kullkani-Pinkall-94}), where $\ML$ is the set of all measured laminations on $S$.
In particular, grafting along multiloops weighted by $2\pi$-multiples preserves the Fuchsian holonomy (see \cite{Goldman-87}).
Moreover this $``2\pi"$-grafting generalizes to grafting of general projective structures along ``admissible" loops (see \S2.2), which preserves holonomy as well.\\

When grafting a hyperbolic surface $\tau$, 
by scaling the transversal measure on the lamination $L$, we obtain a ray of projective structures,  $Gr_{tL} (\tau)$ ($t\geq 0$), in $\mathscr{P}$. 
Then  it descends to a ray $\gr_{t L}$ in $\T$, and then to a ray in $\M$. 

Then \cite{Gupta14} and \cite{Gupta14-2} yield the strong asymptoticity of these grafting rays to Teichm\"{u}ller geodesic rays, which implies
\begin{thm}[Corollary 1.3 of \cite{Gupta14}]\Label{thm:dense}
For any $\tau \in \mathscr{T}$ and almost-every $L\in \mathcal{ML}$, the grafting ray $Gr_{tL}(\tau)$ ($t\geq 0$) descends to a dense ray  in moduli space.
\end{thm}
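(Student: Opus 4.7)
The plan is to combine two ingredients: the asymptoticity of grafting rays to Teichm\"uller geodesic rays, together with the classical ergodicity-based density statement for almost every Teichm\"uller ray in moduli space.

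\emph{Step 1: Setting up a Hubbard--Masur correspondence.} I would first associate to $(\tau, L)$ a holomorphic quadratic differential $q$ on some Riemann surface $X_0$ whose horizontal measured foliation is $L$ (under the Thurston identification of measured laminations with measured foliations). The Hubbard--Masur theorem supplies such a $q$, and the assignment $L \mapsto q$ is a homeomorphism from $\ML$ onto a Euclidean ball in $Q(X_0)$ which sends the Thurston/Lebesgue measure class on $\ML$ to the Masur--Veech measure class on $Q(X_0)$.

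\emph{Step 2: Asymptoticity of the grafting ray.} Next I would invoke the main theorems of \cite{Gupta14, Gupta14-2}: the projected grafting ray $t \mapsto \gr_{tL}(\tau)$ in $\T$ is \emph{strongly asymptotic}, in the Teichm\"uller metric, to the Teichm\"uller geodesic ray determined by the differential $q$ from Step 1, i.e.\ their Teichm\"uller distance tends to $0$ as $t \to \infty$. Geometrically this uses the fact that the conformal geometry of the grafted surface is dominated, in the large-$t$ limit, by long flat cylinders that mimic the horizontal strip decomposition of $q$, and the ``collapsing map" back to $\tau$ gives quantitative control on the residual distortion.

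\emph{Step 3: Density for generic Teichm\"uller rays.} I would then invoke Masur's theorem: the Teichm\"uller geodesic flow on the moduli space of unit-area quadratic differentials is ergodic with respect to the Masur--Veech measure. A standard consequence is that for Masur--Veech a.e.\ quadratic differential $q$, the Teichm\"uller geodesic ray in direction $q$ projects to a dense (indeed equidistributed) subset of $\M$. By Step 1 this ``a.e.\ $q$" condition pulls back to ``a.e.\ $L \in \ML$".

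\emph{Step 4: Transferring density.} Since the projection $\pi \colon \T \to \M$ is $1$-Lipschitz (the Teichm\"uller metric descends), strongly asymptotic rays in $\T$ have equal closures in $\M$. In particular, for a.e.\ $L$, the closure of the grafting ray in $\M$ coincides with the closure of the corresponding Teichm\"uller ray, which is all of $\M$.

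The main obstacle is of course Step 2, the strong asymptoticity statement, which requires delicate estimates on the hyperbolic-to-grafted comparison and on the flat geometry of the inserted Euclidean cylinders in the limit $t \to \infty$; however, these estimates are precisely the content of \cite{Gupta14, Gupta14-2}, so for the purposes of this theorem they may be taken as a black box. The remaining measure-theoretic step (identifying the Lebesgue class on $\ML$ with the Masur--Veech class on $Q$) is classical and straightforward.
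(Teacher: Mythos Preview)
Your proposal is correct and follows essentially the same approach as the paper: the paper does not prove this theorem directly but cites it as Corollary 1.3 of \cite{Gupta14}, noting only that it follows from the strong asymptoticity of grafting rays to Teichm\"uller rays (\cite{Gupta14, Gupta14-2}) combined with Masur's result \cite{Masur82} that generic Teichm\"uller rays are dense in $\M$. Your Steps 2--4 are exactly this outline, with Step 1 (the Hubbard--Masur correspondence and the identification of measure classes) making explicit the measure-theoretic translation that the paper leaves implicit in the remark following the statement.
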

Here ``almost-every" refers to the Thurston measure on $\mathcal{ML}$, the space of measured laminations, so that $(\tau, L)$ corresponds to Teichm\"uler rays that are dense in $\M$ (\cite{Masur82}).
The density in Theorem \ref{thm:dense} yields the density in Theorem \ref{thm:thm1}.\\

In \cite{Gupta14}, the second author obtained the result in Theorem 1.1 for the case when $\rho$ is Fuchsian--- 
$\rho$ determines a hyperbolic structure $\tau$ on $S$, and a dense subset in $\mathscr{M}$ is given by ($2\pi$-)grafting $
\tau$ along multiloops and creating a sequence that approximates a dense grafting ray given by Theorem \ref{thm:dense}.

Let $C \cong (\tau, L)$ denote any projective structure on $S$ in Thurston coordinates. 
 We approximate the grafting ray by $2\pi$-grafting: 
\begin{theorem}\Label{t:EpDense}
For every $\ep > 0$, if  a measured lamination $M$ on $S$ is sufficiently close to $L$ in $\ML$, then for every sufficiently large $t > 0$, 
there is an admissible loop $N$ on $C$, such that, 
 $$d_{\T} (gr_N(C), gr_{t M}(\tau)) < \ep,$$
where $d_{\T}$ denotes the Teichm\"uller metric on $\mathscr{T}$.  
\end{theorem}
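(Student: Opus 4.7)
My plan is to work throughout in Thurston coordinates and reduce the theorem to approximating a measured lamination by an integer-weighted multiloop. Writing $C \cong (\tau, L)$, the projective structure $C$ is $Gr_L(\tau)$, while the target $Gr_{tM}(\tau)$ has Thurston coordinates $(\tau, tM)$. I would seek an admissible loop $N$ on $C$ such that $Gr_N(C)$ has Thurston coordinates $(\tau, L + 2\pi N_\tau)$ for a suitable $2\pi$-integer weighted multiloop $N_\tau$ on $\tau$; the result then reduces to choosing $N_\tau$ so that $L + 2\pi N_\tau$ is close to $tM$ in $\ML$, combined with continuity of the grafting map $gr\colon \T \times \ML \to \T$.

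First, I would invoke the description of admissible $2\pi$-grafting from Section 2.2: an admissible loop $N$ on $C$ should arise as the lift to $C$ of a simple closed geodesic $N_\tau$ on $\tau$ transverse to $L$, and by the additivity of grafting under composition, $Gr_N(C) = Gr_N(Gr_L(\tau)) = Gr_{L + 2\pi N_\tau}(\tau)$, hence $gr_N(C) = gr_{L + 2\pi N_\tau}(\tau)$. Second, for $M$ sufficiently close to $L$ in $\ML$ and $t$ large, I would construct a $2\pi$-integer weighted multiloop $N_\tau$ on $\tau$ with $L + 2\pi N_\tau$ arbitrarily close to $tM$: since $tM$ has growing weight as $t \to \infty$ and its support is close to that of $L$ (by hypothesis $M$ close to $L$), such integer approximations exist with error tending to $0$, and the components of $N_\tau$ can be arranged transverse to $L$ so that their lifts to $C$ are admissible. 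Finally, Teichm\"uller continuity of $gr$ on compact subsets of $\T \times \ML$ yields $d_\T(gr_N(C), gr_{tM}(\tau)) < \ep$, as desired.

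The main obstacle, I expect, is the first step: rigorously establishing the correspondence between admissible $2\pi$-grafting on the general projective structure $C$ and further lamination grafting on the underlying hyperbolic structure $\tau$. This requires a careful analysis of the collapsing map from $C$ onto $\tau$ and verification that a rich enough family of simple closed geodesics on $\tau$ transverse to $L$ can be realized as admissible loops on $C$ with the predicted grafting effect in Thurston coordinates. A secondary technical point is the quantitative density of $2\pi$-integer multiloops in $\ML$ at the scale of $tM$, which should follow from standard density of weighted multicurves combined with the growing weight as $t \to \infty$.
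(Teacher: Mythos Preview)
Your proposal rests on two claims that do not hold as stated, and the paper's argument is organized precisely to avoid them.

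\textbf{The additivity formula is the real problem.} You assert that for an admissible loop $N$ on $C \cong (\tau,L)$ one has $Gr_N(C) \cong (\tau, L + 2\pi N_\tau)$ in Thurston coordinates. This is not true in general. First, if $N_\tau$ is genuinely transverse to $L$ then $L + 2\pi N_\tau$ is not even a measured geodesic lamination, so the formula is meaningless. If instead $N_\tau$ is carried by the same traintrack as $L$ (so that the sum of weights makes sense), the identity $Gr_N(Gr_L(\tau)) = Gr_{L + 2\pi N_\tau}(\tau)$ still fails: $2\pi$-grafting of a general projective structure along an admissible loop does not simply add $2\pi N_\tau$ to the bending lamination. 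The Thurston coordinates of $Gr_N(C)$ can have a different underlying hyperbolic structure and a different lamination; understanding this change is itself a substantial problem (cf.\ Baba's work on $2\pi$-grafting). The paper never computes the Thurston coordinates of $C_N = Gr_N(C)$. Instead it builds, by hand, a piecewise Euclidean/hyperbolic metric on $C_N$ and on $C_t = Gr_{tM}(\tau)$ using nearly-circular fat traintracks carried over from a nearly-straight traintrack on $\tau$, and then exhibits an explicit $(1+\ep)$-bilipschitz map $C_N \to C_t$ (Steps~1--3 of \S4). This is where all the three-dimensional hyperbolic geometry in \S3 is used.

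\textbf{The continuity step also does not close.} Even granting your first step, you invoke continuity of $gr$ on compact subsets of $\T \times \ML$; but $tM \to \infty$ in $\ML$, so you are not on a compact set. The integer approximation (the paper's Lemma~\ref{lem:approx0}) only gives a \emph{bounded} additive discrepancy $D$ between the weights of $L + N$ and $tM$ on each branch, not one tending to zero. What you would need is the statement: two laminations carried by a fixed traintrack whose branch-weights differ by a bounded amount yield grafted surfaces whose Teichm\"uller distance tends to zero as the weights tend to infinity. That statement is essentially the content of the paper's bilipschitz construction and is not a consequence of continuity alone.
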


Let $\rho = \hol(C)$.
Then Theorem \ref{t:EpDense} and Theorem \ref{thm:dense} imply that $\P_\rho$ projects onto a subset in $\M$ whose $\ep$-neighborhood is $\M$:
Pick a generic lamination $M$ so that $(\tau, M)$ correspond to a dense Teichm\"uller ray in $\M$, so that Theorem \ref{thm:dense} holds. 
Since $\ep > 0$ is arbitrary, the projection of $\P_\rho$ must be dense in $\M$, which proves Theorem \ref{thm:thm1}.
Note that, in order to created a dense subset in $\M$,  we only used projective structures obtained by grafting $C$ along multiloops; in addition we can choose different $C$ in $\P_\rho$.

Our strategy is to generalize the argument for Fuchsian holonomy $\pi_1(S) \to \PSLr$ in \cite{Gupta14}, where the second author considers only grafting of hyperbolic structures, and uses the the Thurston metric on projective surfaces to construct  quasiconformal maps with small distortion. However, in this paper we consider grafting of a general projective structure $C$ with arbitrary holonomy $\rho\col \pi_1(S) \to \PSL$, and arguments here lie in three-dimensional hyperbolic geometry.
In particular, we introduce smoother piecewise Euclidean/Hyperbolic metrics on projective surfaces, which modify the Thurston metrics; 
then, in order to prove Theorem \ref{t:EpDense},  we construct piecewise smooth biLipschitz maps with the biLipschitz constant close to $1$.
This new metric is given by fat traintracks on the projective surfaces ``supported'' (\S \ref{s:Traintrack}) on round cylinders  on the Riemann sphere.
 Such fat traintracks were used in \cite{Baba_10-1}, where the first author relates different projective structures with common holonomy by $2\pi$-grafting. 
However our projective structures  $\Gr_N(C)$ and $\Gr_{t M}(\tau)$ in Theorem \ref{t:EpDense} typically do not share holonomy, and we are interested in their conformal structures.
Thus our treatment in this paper is different and more geometric.

\vspace{.1in}

We note that, as described in \cite{Goldman04}, the holonomy map $\hol\col \P \to \chi$ gives a ``resolution" of the mapping class group action on $\chi$.
 (The mapping class group action is $\hol$-equivariant and its action on $\P$ is discrete.)
 Thus it would be interesting if the holonomy fibers $\P_\rho$ tell us about the action on $\chi$.\\

\noindent \textbf{Outline of the proof of Theorem \ref{t:EpDense}.} 
If $M$ is sufficiently close to $L$, there is a {\it nearly straight} fat traintrack $T$ on $\tau$ carrying both $M$ and $L$ (\S \ref{defn:dim}). 
Then we can choose  a multiloop $N$ also carried by $T$ so that $L + N$ is  a good approximation of  $t M$,  compared on $T$ (Lemma \ref{lem:approx0}). 

Accordingly,  there are fat traintracks on the projective surfaces $\Gr_N(C)$ and $\Gr_{t M}(\tau)$ corresponding to $T$ on $\tau$ (\S \ref{S:CircularTraintrack}).
Then we construct certain Euclidean/\\hyperbolic metrics on the projective surfaces: 
 The fat traintracks enjoys Euclidean metrics, given by the carried geodesic measured laminations, and hyperbolic metrics in their complements, given by $\tau$. 
Then, with respect to those metrics, for sufficiently large $t$ , we construct a piecewise smooth bilipschitz map from $\Gr_N(C)$ and $\Gr_{t M}(\tau)$ with small distortion, which yields a  quasiconformal map between $\gr_N(C)$ and $\gr_{t M}(\tau)$ with  small distortion (\S \ref{s:proof}). \\

\textbf{Acknowledgements.}
The first author would like thank John Hubbard and Yair Minsky for bringing his attention to conformal structures of $\PP_\rho$. He acknowledges support from the European Research Council (ERC-Consolidator grant no. 614733).

The second author would like to thank Yair Minsky.
He would like to acknowledge support of the Danish National Research Foundation grant DNRF95 (Centre for Quantum Geometry of Moduli Spaces). 

Both authors acknowledge support from the GEAR Network (U.S. National Science Foundation grants DMS 1107452, 1107263, 1107367).

\section{Preliminaries}

\subsection{Teichm\"uller metric on Teichm\"uller space  and moduli space}
We refer to \cite{Ahlfors66, Hubbard06} for a background to Teichm\"{u}ller theory. In this paper, let  $\pi$ be the projection from the Teichm\"uller space $\mathscr{T}$ to the moduli space $\mathscr{M}$ obtained by forgetting markings.\\

Given two marked Riemann surfaces $X,Y$,  the  \textit{Teichm\"{u}ller distance} between them is 
$$ d_{\T}(X,Y) := \frac{1}{2} \log K $$
where $K$ is the minimum dilatation over all quasiconformal maps between $X$ and $Y$ that preserve the marking. This in fact defines a complete  Finsler metric on $\mathscr{T}$ invariant under the mapping class group. 
Thus it induces a metric topology on moduli space $\mathscr{M}$  that coincides with any other natural topology obtained from the other standard definitions.

\subsection{Laminations and traintracks}
(See \cite{CanaryEpsteinGreen84, Casson-Bleiler-88, Penner-Harer-92, Kapovich-01},  for background on measured laminations and traintracks.) 
A \textit{geodesic lamination} $\lambda$ on a hyperbolic surface is a closed subset that is foliated by simple geodesics ({\it leaves}).
A geodesic lamination is {\it minimal} if each leaf is dense in the lamination.  
On the other hand, a geodesic lamination is {\it maximal} if the complementary regions are ideal triangles.
 A basic example of a geodesic lamination is a \textit{multiloop} which is a union of disjoint simple closed geodesics.  
 Then,  given a geodesic lamination $\lam$, there is a unique decomposition into disjoint components:
\begin{equation*}
\lambda = \lambda_1 \sqcup \lambda _2 \sqcup \cdots \lambda_k \sqcup l_1\sqcup l_2 \sqcup \cdots \sqcup l_m \sqcup  \gamma
\end{equation*}
where $\lambda_i$ ($1\leq i\leq k$) are \textit{minimal} sublaminations, $\l_i$ ($1\leq i\leq m$) are isolated  bi-infinite geodesics that accumulate on the  other components, and $\gamma$ is a multiloop.\\
A \textit{measured lamination} is a geodesic lamination equipped with a measure on transverse arcs that is invariant under  homotopies of arcs preserving the transversality.

\begin{defn}
A \textit{traintrack (graph)} on a closed surface  is an embedded graph with $C^1$-smooth edges (\textit{branches}) such that
\begin{itemize}
\item
 each vertex has degree either two or three, 
\item at each vertex locally $C^1$-diffeomorphic to one of the figures in Figure \ref{f:TraintrackGraph}, so that is is a union of $C^1$-smooth segment, and
\item it complementary regions are not disks, mono-gons, or bi-gons. 
\end{itemize}

Vertices of a traintrack are called {\it switches}.

A \textit{weighted} traintrack is a traintrack with an assignment of non-negative real numbers to each branch, such that at any switch, they satisfy an \textit{switch} condition: $w_1 + w_2 = w_3$  or $w_4 =  w_5$ as in Figure \ref{f:TraintrackGraph}.
\end{defn}

\begin{figure}[H]
\begin{overpic}[scale=.3
] {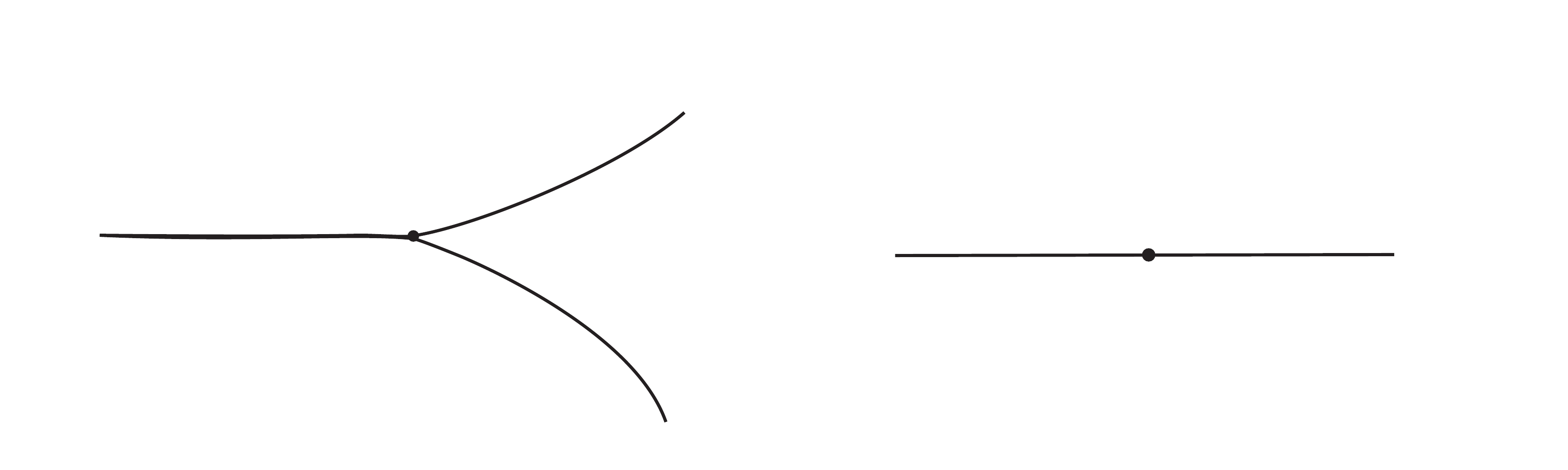} 
\put(16,16){$w_3$}
\put(33,20){$w_1$}
\put(33,7){$w_2$}
\put(60,15){$w_4$}
\put(77, 15){$w_5$}
      \end{overpic}
\caption{}\label{f:TraintrackGraph}
\end{figure}

A weighted traintrack corresponds to a measured lamination on the surface.
Then we say that the measure lamination is {\it carried} by the traintrack.
 In the case when the weights are all integers, a weighted traintrack corresponds to a multiloop (of integer weight).\\

A traintrack is \textit{maximal} if its complementary regions are $C^1$-diffeomorphic to a triangle. 
Then  all possible weights satisfying the switch conditions on a fixed maximal traintrack is a convex cone in a Euclidean space of the dimension equal to the number of the branches. 
Then every measured lamination on the surface is carried by one of finitely many maximal traintracks. 
Thus the space of measured laminations $\ML$ on $S$ enjoys a structure of a piecewise linear manifold.

\subsubsection{Fat traintracks}

Given a rectangle $R$, in this paper, we always choose  a pair of its opposite edges to be horizontal and the other pair to be vertical.  
In addition we decide left and right vertical edges of it.  

A {\it fat traintrack} $T$ on a surface $S$ is a union of rectangles $R$ glued along its vertical edges so that a neighborhood of each vertical edge look like one of the pictures in Figure \ref{f:FatTraintrack}: 
 A vertical edge is either a union of two vertical edges or is identified to another vertical edge.
In the former case,  a {\it switch} point decomposes the long vertical edge into the two short edges. 
We often denote a fat traintrack as  $T = \{R_i\}$, where $R_i$ are rectangles.
Note that every fat traintrack converges to a traintrack graph by a $C^1$-smooth isotopy that collapses rectangles to edges so that the heights  of rectangles limit to zero. 
Thus, similarly, each rectangle $R$ of the traintrack is called a {\it branch}. 
If a vertical  edge of a branch is equal to or contained in a vertical edge of another branch, then we say that those two branches are {\it adjacent}.
Then the {\it boundary} $\bd T$ of  $T$ is the union of horizontal edges of rectangles, and it is a smooth except at switch points.   
Let $|T|$ denote the union of the rectangles of $T$.

\begin{figure}[H]
\begin{overpic}[scale=.3
] {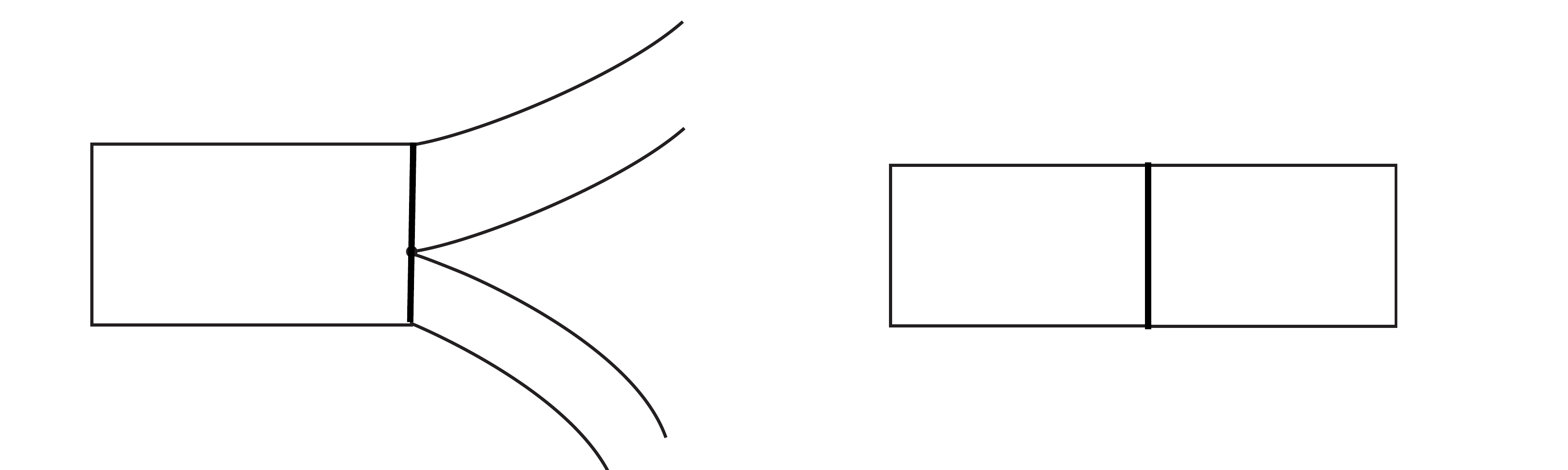} 
      \end{overpic}
\caption{Vertical edges separating adjacent branches}\Label{f:FatTraintrack}
\end{figure}

A measured lamination $M$ on a surface is {\it carried} by a  fat traintrack $T$ if
\begin{itemize}
\item $M$ is contained in $|T|$, 
\item $M$ is transversal to vertical edges of rectangles, and
\item for each branch $R$ of $T$,  $M \cap R$ is  a union of arcs connecting opposite vertical edges of $R$.  
\end{itemize}
Then the weight of $M$ on a branch $R$ of $T$, is the transversal measure, given by $M$,  of a vertical edge of $R$. 

\subsection{Approximating measured laminations by multiloops}

Let $L$ be a measured lamination carried by a maximal traintrack $T$  with $n$ branches with (positive real) weights $(w_1,w_2,\ldots w_n)$, which satisfies the switch conditions.\\

By scaling a transversal measure by $t > 0$, we obtain a ray $t L = (t w_1, t w_2,\ldots t w_n)$ in $\ML$.
Then every point on this ray is approximated by a multiloop carried by the traintrack so that the differences of weights is uniformly bounded: 
\begin{lem}[Lemma 6.15 of \cite{Gupta14}]\Label{lem:approx0}
There exists a constant $D >0$, such that,  for every $t > 0$,  there is a multiloop $M_t$ carried by $T$ such that, letting  $(m_1(t),\ldots m_n(t)) \in \Z_{> 0}^n$ be the weights on $T$ representing $M_t$, 

\begin{equation}\Label{eq:differ}
- D < 2\pi m_i(t) -tw_i  < D
\end{equation}
for each $1\leq i\leq n$. 
\end{lem}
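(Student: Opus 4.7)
My plan is to regard the branch weights on $T$ as points of a rational polyhedral cone and to construct $M_t$ as a suitably nearby integer lattice point. The switch conditions at the vertices of $T$ are integer linear equations in the branch weights $(x_1,\ldots,x_n)\in\R^n$, and together with the non-negativity constraints $x_i\ge 0$ they cut out a rational polyhedral cone $C\subset\R^n_{\ge 0}$. Real points of $C$ correspond to measured laminations carried by $T$, while the integer points $C\cap\Z^n$ correspond to multiloops (with positive integer multiplicities) carried by $T$.

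Since $C$ is a rational polyhedral cone, each of its finitely many extreme rays is spanned by a primitive integer vector. Choose once and for all a finite set of integer generators $v_1,\ldots,v_k\in C\cap\Z^n$, so that every element of $C$ is expressible as a non-negative real combination of the $v_j$; in particular, decompose the weight vector of $L$ as $(w_1,\ldots,w_n)=\sum_{j=1}^k c_j v_j$ with all $c_j\ge 0$, and fix this decomposition.

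For each $t>0$, set $m_j(t):=\lfloor tc_j/(2\pi)\rfloor\in\Z_{\ge 0}$ and take $M_t$ to be the weighted traintrack with branch weights $\sum_j m_j(t)\, v_j$. This weight vector is a non-negative integer combination of the $v_j$, hence lies in $C\cap\Z^n$, so it really represents a multiloop carried by $T$. Since $|m_j(t)-tc_j/(2\pi)|<1$, the weight on branch $i$ satisfies
\[
\Bigl| m_i(t)-\tfrac{tw_i}{2\pi}\Bigr| \;\le\; \sum_{j=1}^k \bigl|m_j(t)-tc_j/(2\pi)\bigr|\cdot v_{j,i} \;\le\; \sum_{j=1}^k v_{j,i},
\]
which is bounded by a constant depending only on $T$. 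Multiplying through by $2\pi$ yields \eqref{eq:differ} with $D := 2\pi\max_i \sum_j v_{j,i}$.

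The one point that requires extra care is the strict positivity $m_i(t)>0$ demanded by the statement. Because $L$ is carried by the \emph{maximal} traintrack $T$, all $w_i>0$, so for $t$ larger than some threshold each coordinate of $\sum_j m_j(t)v_j$ is automatically positive. For the bounded range of small $t$ that remains, one can simply add a fixed integer vector of $C$ with all coordinates strictly positive (such a vector exists because $T$ is maximal and connected, so an integer multicurve traversing every branch can be built as a sum of $v_j$'s), and absorb the resulting bounded correction into $D$. I expect the only mild obstacle to be this bookkeeping; the conceptual content, that a rational polyhedral cone lies within bounded Hausdorff distance of its integer points, is classical.
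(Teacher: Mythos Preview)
Your argument is correct. Note that the paper does not supply its own proof of this lemma: it is quoted as Lemma~6.15 of \cite{Gupta14}, so there is no in-paper proof to compare against. Your approach --- viewing the weight space as a rational polyhedral cone, writing the weight vector of $L$ as a non-negative combination of integer extreme-ray generators, and rounding the coefficients --- is the standard way to approximate real weight systems by integral ones, and is presumably what \cite{Gupta14} does as well.

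Two minor remarks. First, you have a notational collision: $m_j(t)$ is used both for the rounded coefficients $\lfloor tc_j/(2\pi)\rfloor$ and, via the statement, for the resulting branch weights $\sum_j m_j(t)\,v_{j,i}$; rename one of them. Second, the positivity patch can be made uniform: rather than splitting into large and small $t$, simply add once and for all a fixed strictly positive integer point $v_0\in C\cap\Z_{>0}^n$ (such a point exists because a maximal traintrack is recurrent, hence carries a strictly positive rational, and therefore integral, weight system) to the approximation for every $t$, and absorb the extra $\max_i v_{0,i}$ into $D$.
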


\subsection{Admissible loops and projective grafting}
Hyperbolic structures on surfaces are in particular projective structures. 
First we describe a grafting of a hyperbolic structure along a measured lamination.

Let $\exp\col \C \to \C \minus \{0 \}$ be the exponential map. 
Then, 
for $\theta > 0$,   a {\it crescent} of angle $\theta$ is a projective structure on $\R \times [0, \theta]$ given by restricting $\exp$. 
Then if $\theta < 2\pi$,  the crescent of angle $\theta$ is isomorphic to the region on $\RS$ bounded by two embedded circular arcs with common endpoints ({\it vertices}), intersecting at angle $\theta$.
If $\theta > 2 \pi$, the boundary of a crescent region similarly corresponds to such circular arcs but the crescent is immersed in $\rs$.  
The horizontal translations of $\R^2$ restrict projective isomorphisms of the crescent  $\R \times [0, \theta]$. 
Then given  $a >0$, there is a unique infinite cyclic group of such translations, so that the generator corresponds to a hyperbolic element in $\PSL$ of translation length $a$. 
Then the quotient of the crescent by the cyclic group is a projective structure on a cylinder, and  its boundary circles are isomorphic to a geodesic loop on a hyperbolic surface whose length is $a$. 

Let $\ell$ be a geodesic loop on a hyperbolic surface $\tau$. 
Then, for any $\theta > 0$,  then there is a corresponding projective cylinder obtained by quotienting a crescent of angle $\theta$ so that its boundary circles are isomorphic to $\ell$.
Thus we can insert the cylinder along the loop $\ell$, and we obtain new projective structure homeomorphic to $\tau$. 
This operation is a {\it grafting} of $\tau$ along $(\ell, \theta) \in \ML$.
If there are disjoint weighted closed geodesics, we can simultaneously intersect such cylinders of corresponding heights. 
A general measured lamination can be approximated by weighted multiloops.
Thus grafting of $\tau$ along a measured lamination is as a limit of grafting along multiloops. 
We refer to \cite{Kamishima-Tan-92, Epstein-Marden-87} for details.
\

Let $S$ be a closed orientable surface of genus at least two.
Then  any projective structure $C$ on $S$ is obtained by grafting a hyperbolic surface $\tau$ along a measured lamination $L$, and the choices of $\tau$ and $L$ are unique (\cite{Kamishima-Tan-92, Kullkani-Pinkall-94}).
Thus we denote as $C= Gr_L (\tau)$.
 We refer to $(\tau,L)$ as the \textit{Thurston coordinates} of the projective structure $C$, and write $C \cong (\tau, L)$.
Recalling the projection $\pi\col \P \to \T$,
 let  $gr_L(\tau) = \pi \circ Gr_L(\tau)$.

Note that above we graft along a measured lamination $L$ appearing circular on $\RS$.
In addition $\hol \Gr_{t L} (C) \in \chi$ changes when $t > 0$ changes .
Next we consider a variation of grafting along a multiloop on a (general) projective surface, which does not change holonomy.  

 Given a projective structure $C = (f, \rho)$, a  loop $\ell$ on $C$ is \textit{admissible} if  $\rho(\ell)$ is loxodromic and a lift  of $\ell$ to the universal cover embeds in $\RS$ by the developing map $f$. 
An \textit{admissible multiloop} is a union of a disjoint collection of such admissible loops.

If $\ell$ is admissible, the loxodromic $\rho(\ell)$ generates an infinite cyclic group in $\PSL$. 
Then, by a M\"obius transformation, we can  regarding $0$ and $\infty$ to be the fixed points of the cyclic group, so that $f$ embeds $\til{\ell}$ into $\C \minus \{0\}$.
By quotienting  $\C \minus \{0\}$ by the cyclic group, we obtain a projective structure $T_{\rho(\ell)}$ on a torus ({\it Hopf torus}).
Then $\ell$ is isomorphically embedded into $T_{\rho(\ell)}$. 
Similarly we can transform a projective structure $C$ to a new projective structure $\Gr_\ell(C)$ by inserting  the cylinder  $T_{\rho(\ell)} \minus \ell$ along $\ell$. 
In particular such grafting is often called {\it 2$\pi$-grafting}. 
It turns out that $\Gr_\ell(C)$ has also holonomy $\rho$ (see \cite{Goldman-87} and \cite{Kapovich-01}).
If there are disjoint admissible loops on $C$,  we can graft $C$ simultaneously along all loops and obtain a new projective structure, whose holonomy is indeed $\rho$. 
Similarly let $\Gr_M(C)$ denote the resulting projective structure, where $M$ is the union of the admissible loops. 

\subsubsection{Collapsing maps, pleated surfaces, and Thurston metric}
 See \cite{Kamishima-Tan-92, Kullkani-Pinkall-94}
(c.f. \cite{Baba_10-1}).  
Let $C \cong (\tau, L)$ be a projective structure on $S$ in Thurston coordinates. 
Then there is a corresponding measured lamination $\LL$ on $C$ consisting of circular leaves.
Moreover there is a natural {\it collapsing map} $\kap\col C \to \tau$ such that  $\kap$ preserves the marking of the surface and $\LL$ descends to $L$ by $\kap$. 
Given a  (measured) lamination on a surface, a {\it stratum} is a leaf of the lamination or a connected component of the complementary region. 
In particular $\kap$ takes each stratum of $(C, \LL)$ to a stratum of $(\tau, L)$.
Let $(\til{C}, \til{\LL})$ be the universal cover of  $(C, \LL)$. 
Then the developing map $f$ of $C$ embeds each stratum of $(\til{C}, \til{\LL})$ onto a region in $\RS$ bounded by circular arcs.

 The inverse-image  of $L$ on $\tau$ is a measured lamination $\til{L}$ on $\H^2$ invariant under $\pi_1(S)$. 
Then we obtain a {\it pleated surface} $\beta\col \H^2 \to \H^3$  bending $\H^2$ inside $\H^3$ along $L$ by the angles corresponding to its transversal measure of $\til{L}$.  
Then  $\beta$ is equivariant through $\hol(C) =: \rho$ (up to an element of $\PSL$), as $f$ is. 

Then, letting $\til{\kap}\col \til{C} \to \H^2$ be the lift of $\kap\col \C \to \tau$ to a collapsing map between the universal covers, 
 indeed $\til{\kap}$ relates the developing map $f\col \til{C} \to \RS$ and the pleated surface $\beta\col \H^2 \to \H^3$:
Namely, if $\kap$ diffeomorphically takes a stratum $\PP$ of $(\til{C}, \til{\LL})$ to a stratum $P$ of $(\H^2, \til{L})$, then, recalling that $f$ embeds $\PP$ into $\RS$,
$f(\PP)$ is diffeomorphic to $\beta(P)$ by the nearest point projection to a hyperbolic plane in $\H^3$ containing $\beta(P)$.

The {\it Thurston metric} on $C$ is a natural metric associated with the collapsing map $\kap\col (C, \LL) \to (\tau, L)$. 
It is the sum of a natural Euclidean/hyperbolic metric on $C$ and the transversal measure of non-periodic leaves of  $\LL$.
(See \cite{Kullkani-Pinkall-94}.)
In this paper we consider Thurston metric only on subsurfaces where the metric is hyperbolic. 

Let $M$ be the sublamination of $L$ consisting of the closed leaves $m_1, \dots, m_k$ of $L$, so that $M =  m_1 \sqcup \dots \sqcup m_k$.
Then  $\kap$ embeds $\kap^{-1}(M)$ onto $\tau \minus M$.
Then the Thurston metric on $C \minus \kap^{-1}(M)$ is hyperbolic so that $\kap$ is an isometric embedding. 
On the other hand $\kap^{-1}(m_i)$ is a cylinder foliated by parallel closed leaves of $\LL$ that are diffeomorphic to $m_i$ by $\kap$ for each $i \in 1, \dots, k$. 
Then $\kap^{-1}(m_i)$ enjoys a natural Euclidean metric so that its circumference is the length of $m_i$ and its height is the weight of $m_i$.
Thus $\kap^{-1}(m_i), \dots, \kap^{-1}(m_i)$  are disjoint Euclidean cylinders in $C$. 
Therefore the restriction of $\kap$ to each stratum $P$ of $(C, L)$ is an isometric embedding.

\section{Fat traintracks and laminations}\Label{TraintrackLamination}

Let $X, Y$ be metric spaces with distances $d_X, d_Y$ and  $\phi\col X \to Y$  be a continuous map.
Then, $\phi$ is {\it $A$-bilipschitz} for  $A \geq 1$, if $$\frac{1}{A}  d_X(p, q) < d_Y(\phi(p), \phi(q)) < A d_X(p,  q),$$
for all $a, b \in X$.
Given $B \geq 0$,  $\phi$ is a  {\it $B$-rough isometry}  if $$d_X(p, q) - B < d_Y(\phi(p), \phi(q)) <  d_X(p, q) + B.$$
For $A \geq 1, B > 0$, $\phi$ is an {\it $(A, B)$-bilipschitz} map $\phi$  is  an $A$-bilipschitz map and also a $B$-rough isometry. 
In addition, throughout this paper, $(A, B)$-bilipschitz maps are  piecewise differentiable.

\begin{defn}[Nearly straight traintracks]\Label{defn:dim}
For $\epsilon>0$ and $w > 0$, a fat traintrack on a hyperbolic surface is \textit{$(\epsilon, w)$-nearly straight},  if each branch is smoothly $(1+\epsilon)$-rough isometric to some euclidean rectangle of width at least $w$ and at each switch point, the angle between horizontal edges meeting at a switch point is less than $\epsilon$. 
\end{defn}
Here the {\it width} of a rectangle is the distance between the vertical edges. 
Note that, given $w > 0$, if $\ep > 0$ is sufficiently small, then each $(\ep, w)$-nearly straight traintrack almost looks like a geodesic lamination.

\begin{defn}[Angles between laminations;  \cite{Baba_10-1}]\label{defn:angle}
Let $\ell$ and $n$ are simple geodesics on a hyperbolic surface $\tau$.
If $\ell$ and $n$ intersect at a point $p$,  
let $\angle_p(\ell, n)$ be the angle, taking a value in $[0, \pi/2]$, between $\ell$ and $n$ at $p$.
More generally let $\lambda$ and $\nu$ be geodesic laminations on  $\tau$. 
The angle $\angle_\tau(\lambda, \nu)$ be the supremum 
of $\angle_p(\ell_p, n_p)$ over all points $p$ of the intersection $|\lambda| \cap |\nu|$, where $\ell_p$ and $n_p$ are the leaves of $\lambda$ and $\nu$ intersecting at $p$. 
\end{defn}
To determine angles defined in  Definition \ref{defn:angle}, we always take geodesic representatives of loops and laminations even if they are given as  topological loops or laminations. 

Then it follows from basic hyperbolic geometry: 
\begin{lem}\label{lem:angle}
Let $T$ be a  fat traintrack  on a hyperbolic surface.
Let  $\mu, \nu$ be laminations carried by $T$.
For any $\epsilon>0$, there is  $\delta>0$ such that if $T$ is $(\del, \ep)$-nearly straight then $\angle_\tau(\mu, \nu) < \epsilon$.
\end{lem}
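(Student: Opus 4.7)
The plan is to fix $\varepsilon > 0$, choose $\delta > 0$ small at the end, and show that every intersection angle is less than $\varepsilon$. The first observation is a localization: since $\mu$ and $\nu$ are carried by $T$, any intersection $p \in |\mu|\cap|\nu|$ lies in (the closure of) a single branch $R$, and by the carrying condition both leaves $\ell_p$ and $n_p$ cross $R$ from one vertical edge to the other. Hence the angle at $p$ is really the angle between two geodesic arcs of $\tau$ sitting inside a single, nearly Euclidean rectangle.

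Next I would transport the picture to the Euclidean model. The hypothesis provides a $(1+\delta)$-rough-isometric smooth diffeomorphism $\phi_R\colon R\to R_0$ onto a Euclidean rectangle $R_0$ of width $w \geq \varepsilon$, under which the hyperbolic metric pulls back to a metric $O(\delta)$-close to Euclidean. Consequently the images $\phi_R(\ell_p\cap R)$ and $\phi_R(n_p\cap R)$ are $C^1$-close to straight Euclidean segments joining opposite vertical edges of $R_0$, and two such segments meeting at an interior point span an angle at most $2\arctan(h/w)$ with an extra $O(\delta)$ error from the rough isometry, where $h$ denotes the height of $R_0$.

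The main step, and the main obstacle, is to bound $h/w$ uniformly by a quantity tending to zero with $\delta$. Here I would exploit both the switch-angle hypothesis and the fact that $\ell_p$ and $n_p$ extend as global $\tau$-geodesics across many branches: each switch bends the ``horizontal direction'' of $T$ by at most $\delta$, while a $\tau$-geodesic has zero geodesic curvature, so the slope of each leaf in the Euclidean model of every branch it visits is forced to be $O(\delta)$, with implicit constant controlled by the number of branches (bounded in terms of the topology of $S$). Combining this $O(\delta)$ slope bound with the $O(\delta)$ distortion coming from $\phi_R$ yields an angle bound of the form $C(S)\delta$ at every intersection point, and choosing $\delta < \varepsilon/C(S)$ gives $\angle_\tau(\mu,\nu) < \varepsilon$ after taking the supremum over $p \in |\mu|\cap|\nu|$. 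The delicate part is precisely the propagation argument that turns the local rough-isometry and switch-angle hypotheses into the uniform slope bound inside each branch.
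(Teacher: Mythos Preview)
The paper does not supply a proof; it only remarks that the lemma ``follows from basic hyperbolic geometry.'' Your localization to a single branch $R$ and the reduction to showing that the two geodesic arcs $\ell_p\cap R$ and $n_p\cap R$ have small slope (equivalently, that $h/w$ is small) is exactly the right setup and matches what the omitted argument must look like. The gap is in the propagation step you propose for the slope bound.

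You assert that the implicit constant is ``controlled by the number of branches (bounded in terms of the topology of $S$).'' This fails: a single leaf can traverse a given branch of $T$ arbitrarily many times (an irrational leaf does so infinitely often, a long simple closed geodesic does so as many times as its length allows), so in the universal cover it crosses unboundedly many branch lifts, and an $O(\delta)$ drift of the horizontal direction at each switch accumulates without limit. Knowing that the slope changes by $O(\delta)$ from branch to branch says nothing about its absolute size in any one branch. Notice also that the switch-angle hypothesis should play no role here --- the statement is local to a single branch --- which is a sign that the argument is not meant to propagate across switches.

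The intended ``basic hyperbolic geometry'' is that the bilipschitz hypothesis by itself forces each branch to be thin. A region of a surface of curvature $-1$ that is $(1+\delta)$-bilipschitz to a Euclidean rectangle of width $w\ge\epsilon$ must have height $h=O_\epsilon(\sqrt\delta)$: for instance, applying the hyperbolic law of cosines to the images under $\phi_R$ of $(0,0),(w,0),(w/2,h)$ and comparing with the Euclidean Pythagorean relation for the same triple yields a discrepancy of order $\bigl(\cosh(w/2)-\tfrac{2}{w}\sinh(w/2)\bigr)h^2$, which the bilipschitz bound forces to be $O(\delta)$. (Equivalently, in Fermi coordinates $dr^2+\cosh^2 r\,dt^2$ about a geodesic, $(1+\delta)$-closeness to the flat metric gives $\cosh(h/2)\le 1+O(\delta)$.) Once $h=O_\epsilon(\sqrt\delta)$, your own estimate $\angle_p(\ell_p,n_p)\le 2\arctan(h/w)+O(\delta)$ immediately gives $\angle_\tau(\mu,\nu)<\epsilon$ for $\delta$ small enough.
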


Let $\tau$ be a closed hyperbolic surface. 
Consider an essential loop on $\tau$ of shortest length. 
Let $K$ be the one-third of its length. 
Let $\lam$ be a geodesic lamination on $\tau$. 
Then
\begin{lemma}[c.f. \cite{Baba_10-1}]\Label{StraightTraintrack}
 For every $\ep > 0$, there is an $(\ep, K)$-nearly straight fat traintrack $T$ carrying $\lam$ on $\tau$.
\end{lemma}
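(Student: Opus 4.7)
The plan is to construct $T$ by thickening a suitable embedded graph, based on a carefully chosen system of short transverse geodesic arcs to $\lam$.

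Given $\ep > 0$, first fix $\delta > 0$ small enough (depending on $\ep$ and $K$) so that (i) any closed region in $\tau$ whose lift to $\H^2$ is bounded by two transverse geodesic arcs of length at most $2\delta$ and two geodesic leaf segments of length at most $3K$ is $(1+\ep)$-rough isometric to a Euclidean rectangle of the same leafwise length and has angular deviation from its Euclidean model less than $\ep$; and (ii) $\delta < K/4$, so that $2\delta$-long geodesic arcs lie below the injectivity radius (which exceeds $K/2$) and embed in $\tau$. Condition (i) is achievable because, on scales small compared to the curvature scale, the hyperbolic metric $C^1$-approximates the Euclidean metric, so distortion and angular divergence over a $2\delta \times 3K$ rectangle both go to zero as $\delta \to 0$ with $K$ fixed.

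Next, construct a finite family $\Sigma = \{\sigma_1, \dots, \sigma_N\}$ of pairwise disjoint geodesic arcs on $\tau$, each transverse to $\lam$ and of length at most $2\delta$, such that every leaf of $\lam$ crosses $\Sigma$ and every leaf segment between consecutive crossings with $\Sigma$ has length in $[K, 3K]$. Begin with any initial finite transverse system meeting all leaves, obtained by covering $\lam$ with finitely many flow boxes (compactness of $\lam$). Refine iteratively: whenever a leaf segment between consecutive crossings exceeds $3K$, insert a new transversal of length $\leq 2\delta$ at a point lying at distance between $K$ and (length $-\,K$) along the offending segment. The hypothesis $K \leq \mathrm{systole}/3$ is essential here: since such a segment has length greater than $3K \leq \mathrm{systole}$, it still lifts embeddedly to $\H^2$, leaving a genuine open interval of valid placement positions, and the transversal direction can be adjusted to avoid all previously placed transversals.

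Now assemble the fat traintrack $T$ from $\Sigma$. For each ordered pair $(\sigma_i, \sigma_j)$ and each maximal connected family of leaf segments going from $\sigma_i$ directly to $\sigma_j$ without meeting any other transversal of $\Sigma$, form a branch $R$ as the closed region in $\tau$ bounded by sub-arcs of $\sigma_i,\sigma_j$ (vertical edges) and two smooth horizontal edges enveloping the boundary leaves of the family. Each $R$ embeds in $\tau$ because its leafwise length is at most $3K \leq \mathrm{systole}$ and its transverse thickness is at most $2\delta$, both within the injectivity radius. The three defining properties of $(\ep, K)$-nearly straightness then follow: the rough isometry to a Euclidean rectangle of width $\geq K$ from the choice of $\delta$; the width lower bound $K$ from the construction of $\Sigma$; and small switching angles, since two branches meeting at a switch diverge along leaves of $\lam$ at transverse distance at most $2\delta$, yielding a divergence angle of order $O(\delta) < \ep$ by standard hyperbolic estimates on the angle between geodesics sharing a short orthogonal transversal.

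The main obstacle is the simultaneous control of both bounds on consecutive crossings in the construction of $\Sigma$. A single new transversal cuts every leaf of $\lam$ passing through it, so inserting it to resolve one long segment can in principle create short segments on other leaves. The condition $K \leq \mathrm{systole}/3$ supplies the crucial geometric room in the universal cover to choose insertion positions (and directions) that avoid these conflicts; one must additionally check that the iteration terminates, for instance by combining it with a cleanup step that removes redundant transversals, so that both bounds hold uniformly on $\Sigma$.
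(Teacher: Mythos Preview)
The paper does not actually prove this lemma; it is stated with a citation to \cite{Baba_10-1} and no argument is given. So there is no in-paper proof to compare against, and the question is simply whether your sketch stands on its own.

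It does not, and you correctly identify the reason in your final paragraph. The construction of the transversal system $\Sigma$ with return times uniformly in $[K,3K]$ is the entire content of the lemma, and you have not carried it out. Your insertion step (add a transversal to break a segment longer than $3K$) and your cleanup step (delete transversals to repair segments shorter than $K$) pull in opposite directions: a new transversal $\sigma$ of length $2\delta$ meets every leaf of $\lam$ in that $2\delta$-window, and nearby leaves need not share the same itinerary through the existing $\Sigma$. In particular, near a cusp of a complementary region, two asymptotic leaves can be $\delta$-close at $\sigma$ yet meet the rest of $\Sigma$ at quite different arclengths, so a placement that is safe for one leaf can create a sub-$K$ segment on its neighbor. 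Deleting a transversal to fix that can recreate a super-$3K$ segment elsewhere. Nothing you have written---and not the systole bound by itself---shows this process stabilizes; a complete proof along these lines would require a genuine termination argument, and ``one must additionally check'' is not one.

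It is also worth noting that the two-sided bound $[K,3K]$ is stronger than the lemma demands: the definition of $(\ep,K)$-nearly straight only asks for width at least $K$, and for a strip of transverse thickness $\leq 2\delta$ bounded by geodesics the $(1+\ep)$-rough-isometry to a Euclidean rectangle holds with additive error $O(\delta)$ regardless of the horizontal length (by convexity of the distance-to-a-geodesic function, the geodesic between any two points in the strip stays in the strip, and the projection to a boundary leaf is $1$-Lipschitz). So your condition~(i) is achievable without the upper bound $3K$, and the construction you actually need is the easier one-sided one. The usual route---a thin $\delta$-neighborhood of $\lam$ with its tie foliation, with switches placed at the spikes of complementary regions---realizes this directly and is presumably what \cite{Baba_10-1} does.
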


Then we have
\begin{lemma}\Label{l:TrainintrackNbhd}
Let $\tau$ be a hyperbolic surface.
Consider the shortest geodesic loop on $\tau$ and  let $K > 0$ be the one third of its length.
Then for every measured lamination $L$ on $\tau$ and $\ep > 0$, there is a neighborhood $U$ of $L$ in $\ML$, such that for every $M$ in $U$, there is an $(\ep, K)$-nearly straight traintrack on $\tau$ carrying $L$ and $M$. 
\end{lemma}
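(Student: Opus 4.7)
The strategy is to produce, via Lemma \ref{StraightTraintrack}, a single $(\ep, K)$-nearly straight fat traintrack $T$ that carries $L$ with strictly positive weight on every branch, and then invoke the standard fact that such a $T$ carries an entire open neighborhood of $L$ in $\ML$; this immediately gives both $L$ and any nearby $M$ being carried by the same nearly straight $T$.

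First I would apply Lemma \ref{StraightTraintrack} with a slightly smaller parameter $\ep' < \ep$ to obtain an $(\ep', K)$-nearly straight fat traintrack $T_0$ carrying $L$. If some branches of $T_0$ are not traversed by any leaf of $L$ (equivalently, receive zero $L$-weight), I would delete them and take $T$ to be the sub-fat-traintrack formed by the remaining branches. Since pruning only removes rectangles and cannot worsen the individual rough-isometry constants of surviving branches nor increase the angles at surviving switch points inherited from $T_0$, the resulting $T$ remains $(\ep, K)$-nearly straight, still carries $L$, and now places $L$ in the relative interior of the weight cone of $T$.

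I would then invoke the standard piecewise linear structure on $\ML$ provided by traintrack charts (cf.\ \cite{Penner-Harer-92}): the measured laminations carried by $T$ correspond to the nonnegative weight vectors on the branches of $T$ satisfying the switch conditions, and they form a PL subcone of $\ML$. Because $L$ lies in the relative interior of this cone, a full open neighborhood $U$ of $L$ in $\ML$ is contained in it, and so every $M \in U$ is carried by $T$ alongside $L$, as required.

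The main obstacle is verifying that the pruning step produces a genuine fat traintrack, in particular that it does not create a complementary monogon or bigon or leave a switch whose incident branches no longer satisfy the combinatorial definition. This is handled by analyzing the canonical decomposition $L = \lambda_1 \sqcup \cdots \sqcup \lambda_k \sqcup l_1 \sqcup \cdots \sqcup l_m \sqcup \gamma$ into minimal sublaminations, isolated leaves, and a multiloop, and checking that the rectangular neighborhoods of these pieces assemble consistently at their shared vertical edges once the zero-weight branches in the complementary regions are removed. The accumulation of the isolated leaves onto the minimal sublaminations ensures that the pruned fat traintrack retains the correct topology of complementary regions, while taking $\ep'$ strictly less than $\ep$ provides slack to absorb any small angle corrections at the switches that arise in the process.
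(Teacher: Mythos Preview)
Your approach has a genuine gap. The key claim --- that once $L$ has strictly positive weight on every branch of the pruned traintrack $T$, a full open neighborhood of $L$ in $\ML$ is carried by $T$ --- is false unless the weight cone of $T$ has full dimension $6g-6$. The cone of admissible weights on $T$ is a PL cone whose dimension equals the number of branches minus the number of independent switch equations; when $T$ is not maximal this dimension can be strictly smaller than $\dim \ML$. For a concrete failure, take $L$ to be a single weighted simple closed geodesic: after pruning, $T$ is a fat annulus and its weight cone is one-dimensional, so \emph{no} open set of $\ML$ lies inside it. Nearby measured laminations $M$ (for instance, a two-component multicurve containing a curve transverse to $L$, with tiny weight on the new component) are simply not carried by $T$. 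Being in the relative interior of the cone only buys you an open set in the cone itself, not in $\ML$.

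The paper's proof handles exactly this difficulty by a compactness argument in the space $\GL$ of geodesic laminations. Assuming no neighborhood works, one extracts a sequence $M_i \to L$ with no common $(\ep,K)$-nearly straight carrier, passes to a subsequence so that the supports $|M_i|$ Hausdorff-converge to some lamination $\lambda$, observes that $\angle_\tau(L,\lambda)=0$ forces $\lambda \cup |L|$ to be a geodesic lamination, and then applies Lemma~\ref{StraightTraintrack} to $\lambda \cup |L|$. The resulting traintrack carries $L$ and, because it carries the Hausdorff limit $\lambda$, also carries $M_i$ for all large $i$, a contradiction. The essential point your argument misses is that $\lambda$ may be strictly larger than $|L|$, and the traintrack must be built to carry this possibly larger limit rather than $L$ alone.
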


\begin{proof}
Suppose, to the contrary, that there is no such a neighborhood $U$. 
There is a sequence of measured laminations $M_i$ that converges to $L$ such that,  for each $i$, there is no $(\ep, K)$-nearly straight traintrack on $\tau$ caring both $L$ and $M_i$. 
Then, since the space $\GL$ of geodesic laminations on $S$ is compact, we can in addition assume that $|M_i|$ converges to a geodesic lamination $\lam$ by taking a subsequence. 
Then since $M_i \to L$, thus $\angle_\tau(L, \lam) = 0$.
Therefore $\lam \cup |L|$ is a geodesic lamination on $\tau$. 
Thus, by Lemma \ref{StraightTraintrack}, there is an $(\ep, K)$-nearly straight traintrack $T$ carrying $\lam \cup |L|$.
Clearly $T$ carries $L$. 
In addition, since $M_i$ converges to $L$, thus $M_i$ is carried by $T$ for large enough $i$\,---\,this is a contradiction.
\end{proof}


\subsection{Fat traintracks on projective surfaces}\Label{s:Traintrack}

Given a projective structure $C \cong (\tau, L)$ in Thurston coordinates, there is a nearly straight traintrack $T$ on $\tau$ carrying $L$ by Lemma \ref{StraightTraintrack}.
In this section we construct a corresponding ``nearly circular" traintrack on $C$, as used in \cite{Baba_10-1, Baba_10-2}.

\begin{defn}A \textit{round cylinder} is  a cylinder $\AA$ embedded in the Riemann sphere $\RS$ bounded by round circles.  
Recalling that $\bd \H^3 = \RS$,  there is, in $\H^3$,  a unique geodesic orthogonal to the hyperbolic planes bounded by boundary circles of $\AA$--- we call this geodesic the {\it axis} of $\AA$.
Then this axis contains a geodesic segment that connects the hyperbolic planes, which we call the {\it core} of $\AA$. 
Then $\AA$ is foliated by round circles  bounding disjoint copies hyperbolic planes orthogonal to the axis ({\it vertical foliations of $\AA$}). 
\end{defn}
\textit{Remark.} Let $m$ be the modulus of $\AA$. Then  $2\pi m$ is the distance between the hyperbolic planes bounded by boundary circles of $\AA$.  

\begin{defn}[Canonical metrics on  round cylinders] \label{def:EuclideanCylinder}
Let $\AA$ be a round cylinder on $\RS$ and  let $g$ be the axis of $\AA$.
Then $\RS$ minus the end points of $g$ is biholomorphic to an infinite Euclidean cylinder.
 Its universal cover is biholomorphic to $\C$, which is biholomorphic to the Euclidean plane. 
Thus this Euclidean metric induces a Euclidean metric on $\AA$.
Since its Euclidean metric is unique up to scaling, we normalize it so that each round circle of the vertical foliation of $\AA$ has length $2\pi$.
\end{defn}

An arc $\al$ is {\it supported} on a round cylinder  $\AA$  on $\RS$ if it is properly embedded in $\AA$  and transversal to the vertical foliation of $\AA$, so that it connects the boundary components of $\AA$.

Let $R$ be a projective structure on  rectangle.
Then $R$ is supported on a round cylinder $\AA$ on $\RS$ if $dev(R)$ is into $\AA$, its horizontal edges are supported on $\AA$ and its vertical edges immerses into boundary circles of $\AA$ (Figure \ref{f:Supported}).
In particular $R$ is a {\it circular rectangle} supported on $\AA$ if its horizontal edges are orthogonal to the vertical foliations of $\AA$.

\begin{figure}[H]
\begin{overpic}[scale=.25, 
] {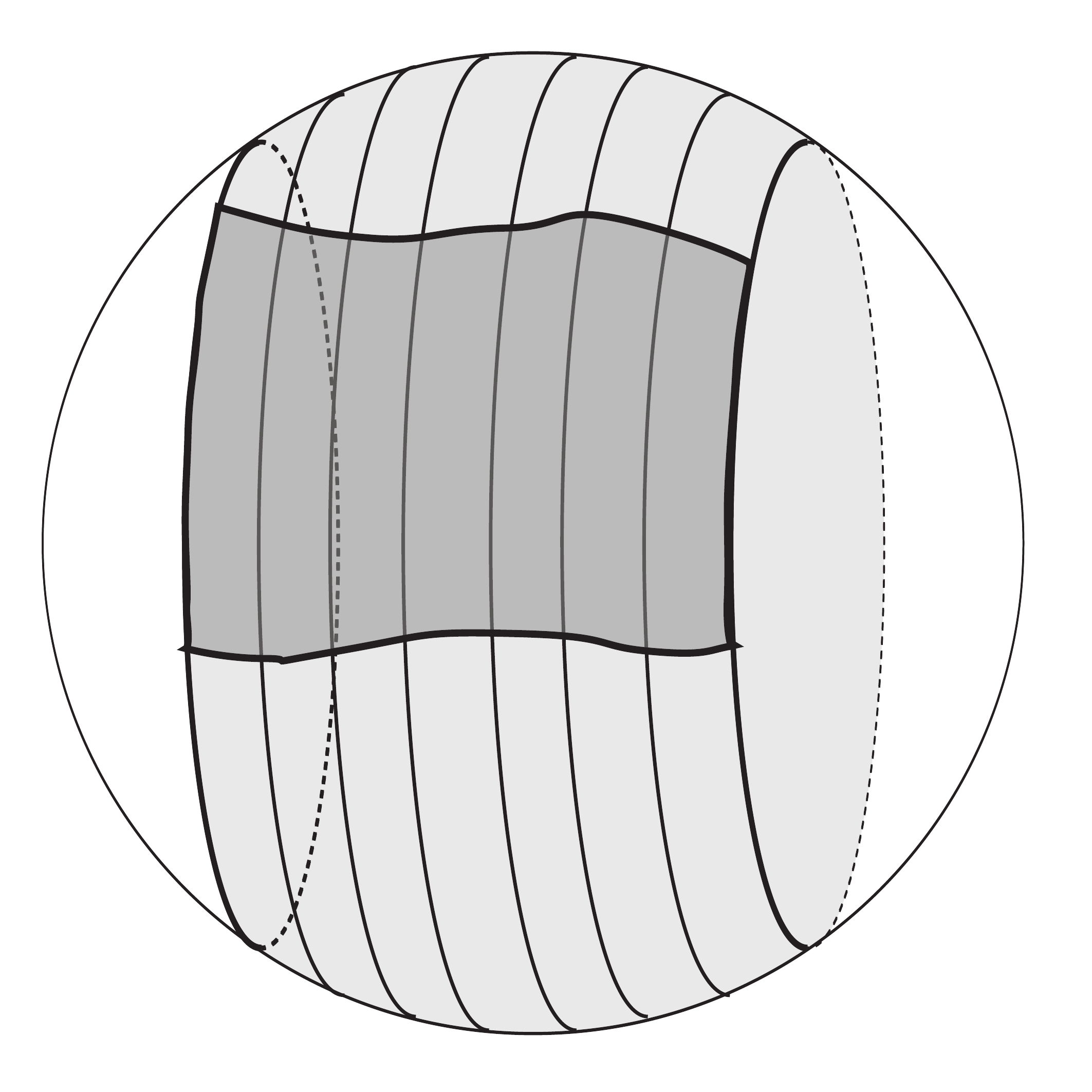} 
         \put(8, 50){$\RS$}
         \put(47, 84){$\AA$}
         \put(40, 60){$\RR$}
      \end{overpic}
\caption{A rectangle supported on a round cylinder (embedded case)}\label{f:Supported}
\end{figure}

\begin{defn}[Foliation and metric on rectangles]\Label{DefEuclideanRectangle} 
Let $R$ be a projective structure on a rectangle supported on a round cylinder $\AA$. Let $\FF$ be the canonical foliation of $\AA$ by round circles. 
Let $e_1$, $e_3$ be its horizontal edges and  $e_2, e_4$ its vertical edges, so that, via ${\rm dev}(R)$, $e_1$ and $e_2$ immerse into different boundary circles of $\AA$ and $e_2$ and $e_4$ are transversal to $\FF$.  
Then we can pull back $\FF$ to a foliation of $R$ (\textit{vertical foliation}).  
Similarly $R$ enjoys a natural Euclidean metric by pulling back the Euclidean metric on $\AA$ via  $dev(R)$.
\end{defn}

Note that if a rectangle $R$ is supported on a round cylinder, then with the canonical Euclidean metric, $R$ isometrically embeds in $\R^2$ (so that the leaves of  its vertical foliation maps to vertical lines).

\begin{definition}[Nearly circular arcs]
Let $\al$ be an arc supported on a round cylinder $\AA$, and let $\ep > 0$,  
Then $\al$ is $\ep$-nearly circular if $\al$ is contained in a circular rectangle of height at most $\ep$ and it intersects each vertical leaf of $\AA$ at an angle $\ep$-close to $\pi/2$.
\end{definition}

A fat traintrack $\TT = \{\RR_i\}$ on a projective surface is {\it supported} on round cylinders if each branch $\RR_i$ is supported on a round cylinder on $\RS$ by the developing map. Then

\begin{theorem}[\cite{Baba_10-1} Lemma 7.2.]\Label{AdmissibleTraintrack}
Let $\TT$ be a fat traintrack on a projective surface.
If $\TT$ is supported on round cylinders, then every loop carried by $\TT$ is admissible. 
\end{theorem}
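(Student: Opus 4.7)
Let $C$ be the projective surface, $f\col \til C \to \RS$ its developing map, and $\rho$ its holonomy. The plan is to exhibit the developed image of a lift of a loop $\ell$ carried by $\TT$ as sitting inside a strictly nested chain of round disks in $\RS$, and to read off both the loxodromic nature of $\rho(\ell)$ and the embedding property from that chain. The crucial geometric observation is a rigidity for adjacent branches: if two branches $\RR_i, \RR_j$ of $\TT$ are adjacent along a vertical edge $e$, then $e$ is a nondegenerate arc lying both in a boundary circle of the supporting round cylinder $\AA_i$ and in a boundary circle of $\AA_j$. Since two distinct round circles in $\RS$ meet in at most two points, these boundary circles must coincide, and the two branches develop on opposite sides of a single common round circle in $\RS$.

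Next pass to the universal cover and choose any lift $\til\ell \subset \til C$. This lift traverses a sequence of lifted branches $\til\RR_j$, whose developed images are round cylinders $\til\AA_j \subset \RS$. By the rigidity above, consecutive cylinders $\til\AA_j, \til\AA_{j+1}$ share a full boundary circle $C_j$, with $\til\AA_{j+1}$ contained in the component $D_j$ of $\RS \setminus C_j$ disjoint from $\til\AA_j$. In particular the other boundary circle $C_{j+1}$ of $\til\AA_{j+1}$ lies inside $D_j$, producing a strictly nested sequence of open round disks $D_1 \supsetneq D_2 \supsetneq \cdots$. Because the nesting is strict the chain cannot close up cyclically; hence $\ell$ is essential, $\til\ell$ is genuinely bi-infinite, and running it backwards produces a dually nested sequence in the other direction.

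The chain is equivariant under $\rho(\ell)$, and one period consists of finitely many cylinders each of positive modulus. Hence the total conformal distance across the chain diverges in both directions, forcing $\bigcap_j \overline{D_j}$ and the corresponding nested intersection in the opposite direction each to collapse to a single point, call them $x_+$ and $x_-$. These points are distinct (otherwise $f(\til\ell)$ would be contained in a bounded region of $\RS \setminus \{x_+\}$, contradicting properness of the developing map on $\til\ell$), both are fixed by $\rho(\ell)$, and $\rho(\ell)$ shifts the chain by one period; thus $\rho(\ell)$ is loxodromic with fixed points $x_\pm$. The strict nesting also shows that $f(\til\ell)$ crosses the distinct circles $C_j$ in disjoint points, so $f$ restricts to an embedding on $\til\ell$, giving the second condition for admissibility. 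The main technical step is the round-circle rigidity used in the first paragraph: it converts the combinatorial adjacency of branches into the geometric statement that consecutive supporting cylinders meet along a full round circle; once in hand, the nested-disk picture together with the divergence of total modulus across one period yield the remaining conclusions in a routine manner.
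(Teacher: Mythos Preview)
The paper does not prove this statement itself; it is quoted from \cite{Baba_10-1}, Lemma~7.2, so there is no in-paper argument to compare against. Your nested-disk approach is the natural one and is almost certainly what appears there: the observation that adjacent supporting cylinders must share an entire boundary circle (since a nondegenerate arc of a round circle determines the circle) is exactly the right engine, and deducing that $\rho(\ell)$ is loxodromic from $\rho(\ell)(\overline{D_0}) = \overline{D_n} \subsetneq D_0$ is standard.

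One step needs tightening. Your last sentence claims that the strict nesting of the $C_j$ already forces $f|\til\ell$ to be an embedding, but nesting only guarantees that the pieces $f(\til\ell \cap \til\RR_j)$ land in the pairwise interior-disjoint annuli $\til\AA_j$; you still need $f$ to be injective on each individual arc $\til\ell \cap \til\RR_j$. That does not follow merely from the arc connecting opposite vertical edges of $\til\RR_j$: the developing map of $\til\RR_j$ can wrap vertically around $\til\AA_j$ (the rectangle may have Euclidean height exceeding $2\pi$), so an arc that wanders in the vertical direction can self-intersect after developing. What is needed is that each such arc be transverse to the vertical foliation of $\til\RR_j$ --- in the paper's language, that it be a \emph{supported} arc --- so that its image is transverse to the round-circle foliation of $\til\AA_j$ and hence embedded. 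This is easily arranged by isotoping $\ell$ within $|\TT|$, but it should be said. Separately, your argument for $x_+ \neq x_-$ via properness is more than you need: simply observe that $x_+ \in \overline{D_1} \subset D_0$ while $x_-$ lies in the opposite component of $\RS \setminus C_0$.
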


\begin{definition}[Nearly circular rectangles]\label{defn:almostCirc}
Let $R$ be a rectangular projective surface supported on a round cylinder $\AA$. 
The rectangle $R$  is \textit{circular} if it horizontal edges are circular arcs on $\AA$ that are orthogonal to $\FF$. 

More generally, 
for $\ep > 0$ and $K > 0$, the rectangle $R$ is $(\ep, K)$-{\it nearly circular}, if 
\begin{itemize}
\item the core of $\AA$ has length at least $K$, and
\item each horizontal edge of $R$ is $\ep$-circular.\end{itemize}
\end{definition}

Note that, by the second condition, if $R$ is an $(\ep, K)$-nearly circular rectangle, then the Euclidean lengths of vertical leaves of $R$ are $2\ep$-close.

\begin{definition}[Nearly concentric cylinders]
Let $\AA_1$ and $\AA_2$ are round cylinders on $\RS$.
Let $c_1$ and $c_2$ be their cores.  
Suppose that $\AA_1$ and $\AA_2$ are {\it adjacent} along a loop $\ell$, i.e. $\AA_1$ and $\AA_2$ have disjoint interiors and share a boundary loop $\ell$.
Then the hyperbolic plane bounded by $\ell$ contains an endpoint of $c_1$ and an endpoint of $c_2$.
Then $\AA_1$ and $\AA_2$ are {\it concentric} if  $c_1 = c_2$.
More generally, for $\ep > 0$,  $\AA_1$ and $\AA_2$ are {\it $\ep$-nearly concentric}, if the endpoints are $\ep$-close in the hyperbolic plane. 
\end{definition}

\begin{definition}[Nearly-circular traintrack]
A fat traintrack $\TT$ on a projective surface  is $(\ep, K)$-nearly circular if each branch of $\TT$ is $(\ep, K)$-nearly circular and round cylinders supporting adjacent branches of $\TT$ are $\ep$-nearly concentric.
\end{definition}

\subsubsection{Existence of nearly circular traintracks}\Label{S:CircularTraintrack} 

Let $C \cong (\tau, L)$ be a projective structure on $S$.
Let $\lam$ be a geodesic lamination containing $|L|$.
Let $K > 0$ be a constant that does not exceed the one-third of the length of shortest leaf of $\lam$. (In the case that $\lam$ contains no closed leaf,  fix arbitrary $K > 0$.)
Then, for every $\ep > 0$ there is an $(\ep, K)$-nearly straight fat traintrack $T_{\ep, K}$ on $\tau$ carrying $\lam$ (Lemma \ref{StraightTraintrack}).
In the following discussion,  by `` sufficiently straight  $T$"  we mean an $(\ep, K)$-nearly straight traintrack $T$ with sufficiently small $\ep > 0$ while $K$ is fixed by $C$. 

Let $\kap \colon C \to \tau$ be the collapsing map. 
Let $\LL$ be the measured lamination on $C$ that descends to $L$ via $\kap$.
We say that a fat traintrack on $\TT$ on $C$ {\it descends} to $T$, if  $\kap(|\TT|) = |T|$ as subsets of $\tau$, and  $\kap$ bijectively takes branches  $\TT$ to branches $T$ up to a small perturbation of vertical edges. 
Then 
\begin{thm}\Label{ThmTraintrack}
For every $\ep > 0$ and $K > 0$, there is $\del > 0$ such that  
given a projective structure $C \cong (\tau, L)$ on $S$ and  a $(\del, K)$-nearly straight  fat traintrack $T = \{ R_i \}$ on $\tau$ carrying $L$, 
then there is a fat traintrack $\TT = \{ \RR_i\}$ on $C$  that descends to $T$, up to  $\ep$-small isotopy of vertical edges, such that  
\begin{enumerate}
\item  if $R$ and $\RR$ are corresponding branches of $T$ of $\TT$, respectively, 
then
\begin{enumerate}
\item $\RR$ is supported on a round cylinder on $\RS$ and the core of the round cylinder has length $\ep$-close to the width of the branch $R$, and
\item each vertical leaf of $\RR$ has length $\ep$-close to the weight of $L$ on $R$.  
\end{enumerate}
\item    $\TT$ is $(\ep, K)$-nearly circular.\Label{i:height}
\end{enumerate}
\end{thm}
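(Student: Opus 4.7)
The plan is to build the traintrack $\TT$ on $C$ branch by branch, transferring the nearly straight rectangles of $T$ on $\tau$ to nearly circular rectangles on $C$ via the pleated surface $\beta\col\H^2\to\H^3$ associated to the Thurston coordinates $(\tau,L)$. The bridge is the relation between the developing map $f$ and the pleated surface $\beta$ via nearest-point projection onto hyperbolic planes (as recalled in the excerpt), which lets me read off the developed picture in $\RS$ from the bending geometry in $\H^3$.

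For each branch $R$ of $T$ with positive $L$-weight, fix a lift $\til R\sub\H^2$ and choose a reference leaf $\til m\sub\til L$ crossing $\til R$ from one vertical edge to the other. Since $T$ is $(\del,K)$-nearly straight with $\del$ small, Lemma \ref{lem:angle} implies every leaf of $\til L\cap\til R$ makes angle $O(\del)$ with $\til m$. The pleated surface $\beta$ is an isometry on each leaf, so $g:=\beta(\til m\cap\til R)$ is a hyperbolic geodesic segment in $\H^3$ whose length differs from the width of $R$ by $O(\del)$. I take $g$ as the core of the round cylinder $\AA_R\sub\RS$, defining $\AA_R$ by the two hyperbolic planes perpendicular to $g$ at its endpoints; this gives conclusion (1a). (Branches with zero $L$-weight use a generic horizontal geodesic in $\til R$ in place of $\til m$, producing rectangles with vanishing vertical-leaf length.)

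The key technical input, replacing the planar analysis of \cite{Gupta14}, is a three-dimensional hyperbolic estimate: if the leaves of $\til L\cap\til R$ are pairwise within angle $O(\del)$ in $\H^2$, then although the total transverse measure, and hence the total bending angle of $\beta$ across $\til R$, may be arbitrarily large, the pleated image $\beta(\til R)$ lies within an $\eta(\del,K)$-neighborhood of a hyperbolic plane containing $g$, with $\eta(\del,K)\to 0$ uniformly as $\del\to 0$. Intuitively, the cumulative bending is nearly a screw motion about the common axis of $\AA_R$, so deviations from exact parallelism telescope rather than accumulate. Via the nearest-point projection, this forces every developed circular arc $f(\til{\ell})$, for $\til{\ell}$ a leaf of $\til{\LL}\cap\kap^{-1}(\til R)$, to be $\ep$-close to a leaf of the vertical foliation of $\AA_R$. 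This is the main technical lemma and is similar in spirit to the circular traintrack constructions of \cite{Baba_10-1}.

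With the estimate in hand, I define $\RR\sub C$ as the projection from $\til C$ of the union of the portions of strata of $(\til C,\til{\LL})$ whose $f$-images lie in $\AA_R$, cut along the preimages of the two boundary planes of $\AA_R$. Its vertical edges lie on preimages of the boundary circles of $\AA_R$, and agree up to $\ep$-isotopy with the $\kap$-preimages of the vertical edges of $R$. Each vertical leaf of $\RR$ parametrizes the total angular swing of the developed circular arcs across $\RR$; since round circles have normalized length $2\pi$ and each leaf of $\til L$ contributes its transverse weight to this swing, the length of each vertical leaf equals the weight of $L$ on $R$ up to $O(\ep)$, giving (1b). Near-circularity of horizontal edges, and hence (2), follows directly from the estimate. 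For the near-concentric adjacency, when two branches $R_1,R_2$ of $T$ share a vertical edge, I would take their reference leaves $\til m_1,\til m_2$ to coincide whenever possible (producing exact concentricity of $\AA_{R_1},\AA_{R_2}$), and otherwise to be $\del$-nearly parallel, so that the cores meet the shared bounding hyperbolic plane at points separated by $O(\del)$. Choosing $\del$ sufficiently small in terms of $\ep$ and $K$ closes the argument.

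\textbf{Main obstacle.} The crux is the three-dimensional hyperbolic estimate above: bounding, uniformly in the total transverse measure, the deviation of a pleated hyperbolic strip with nearly parallel bending geodesics from a round cylinder. All remaining steps are routine once this is in place. It is precisely this estimate that distinguishes the present argument from the planar one of \cite{Gupta14} (where, in the Fuchsian case, there is no bending) and compels the analysis into three-dimensional hyperbolic geometry.
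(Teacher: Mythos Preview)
Your overall strategy---transport the nearly straight rectangles of $T$ to $C$ via the pleated surface $\beta$ and the $f$/$\beta$ nearest-point-projection relation, then verify circularity from the hyperbolic geometry of the bent strip---is indeed the one the paper uses, and you have correctly identified that the heart of the matter is a three-dimensional estimate absent from the Fuchsian case. But two points in your implementation need repair.

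First, your statement of the key estimate is not quite right. You claim that $\beta(\til R)$ lies in an $\eta(\del,K)$-neighborhood of \emph{a hyperbolic plane} containing $g$. This cannot hold when the transverse measure (hence total bending angle) across $R$ is large: the pleated strip then wraps many times around the axis and cannot stay near any single plane. The correct statement, and what the paper actually uses, is that $\beta(\til R)$ stays $\ep$-close to the \emph{core geodesic} $g$ itself; equivalently, each stratum $\beta(P\cap\til R)$ lies in a plane nearly containing $g$, with the plane varying from stratum to stratum. Your subsequent conclusion (developed circular arcs are $\ep$-close to vertical leaves of $\AA_R$) is the right one, so this may be a slip of phrasing, but as written the intermediate claim is false.

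Second, and more structurally, your branch-by-branch construction of the cylinders does not produce a fat traintrack. You define $\AA_R$ by the two planes perpendicular to the core $g=\beta(\til m\cap\til R)$ at its endpoints, with $\til m$ a reference leaf chosen per branch. For adjacent branches $R_1,R_2$ sharing a vertical edge, the reference leaves $\til m_1,\til m_2$ will in general be distinct (at a trivalent switch they \emph{cannot} all coincide), so the perpendicular planes at the shared side differ, and hence the boundary circles of $\AA_{R_1}$ and $\AA_{R_2}$ differ. Then the vertical edges of $\RR_1$ and $\RR_2$, which you have placed on preimages of those boundary circles, do not match, so the $\RR_i$ do not glue to a fat traintrack; and the notion of ``$\ep$-nearly concentric'' in (2) is only defined for cylinders that are genuinely adjacent along a common boundary loop. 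The paper avoids this by reversing the order of construction: it first chooses, $\rho$-equivariantly, a round circle $c_p$ at each \emph{switch point} $p$ of $\til T$ (with $\beta(p)\in H_p$ and $H_p$ nearly orthogonal to the strata meeting that vertical edge), and only then declares $\AA_R$ to be the cylinder bounded by the circles at the two ends of $R$. Adjacency is then automatic, and the core-length and near-concentric estimates follow from the near-orthogonality at the switches together with the near-straightness of $T$. Reorganizing your argument this way closes the gap without changing its substance.
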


In \cite{Baba_10-1}, the first author obtained a similar statement simultaneously for a certain family of projective structures with fixed holonomy.
Theorem \ref{ThmTraintrack} essentially follows from arguments in \cite[\S 7.2]{Baba_10-1}. 
We here outline how to apply the arguments to our setting.

{\it Outline of the proof.} 
Let $\kap\col C \to \tau$ be the collapsing map.
Let $\til{\kap} \col \til{C} \to \H^2$ be its lift to the collapsing map between their universal covers. 
Let $\til{T}$ and $\til{\TT}$ be the lifts of $T$ to $\h^2$ and $\TT$ to $\til{C}$, respectively. 
Let $\beta \col \H^2 \to \H^3$ be the pleated surface induced by $(\tau, L)$.
For  each switch point $p$ of $\til{T}$, since $p$ is disjoint from $\til{L}$, there is a unique point  ${\tt p} \in \til{C}$ mapping to $p$ by $\til{\kap}$. 
Then, 
for every $\ep > 0$, if $T$ is sufficiently straight, then,  we can pick a round circle $c_p$ on $\RS$ for each switch point $p$ of $\til{T}$ such that 
\begin{enumerate}
\item ${\tt p}$ is contained in $c_p$, 
\item the hyperbolic plane $H_p$ in $\H^3$ bounded by $c_p$ contains $\beta(p)$,
\item if a stratum $P$ of $(\H^2, \til{L})$ intersects a vertical edge of $\til{T}$ containing $p$, then $\beta(P)$ intersects $H_p$ transversally at an angle $\ep$-close to $\pi/2$ \Label{i:NearlyOrthgonal}, and
\item the choice of  $c_p$'s is $\rho$-equivariant.  \Label{i:Equivariant}
\end{enumerate}
Such a family of round circles $\{c_p\}$ can be constructed, since $T$ is nearly straight, using the relation between $dev(C)$ and $\beta$. 
Recall,  if $\PP$ is the stratum of $(\til{C}, \til{\LL})$ corresponding to $P$, that  $dev(C)$ embeds $\PP$ into $\RS$.
Then, by  (\ref{i:NearlyOrthgonal}), if $\PP$ is a circular arc, then $\PP$ intersects $c_p$ in a single point at an angle $\ep$-close to $\pi/2$.
Similarly, if $\PP$ has interior, then $\PP$ intersects $c_p$ in a circular arc  {\it nearly orthogonally}, that is,  exactly two circular arcs of $\bd \PP$ intersect $c_p$  at angles $\ep$-close to $\pi/2$.
Since each branch of $T$ has width at least $K$, if $\ep > 0$ is sufficiently small, then for each branch $R$ of $\til{T}$, the round circles corresponding to the switch points in its  vertical edges are disjoint by  (\ref{i:NearlyOrthgonal}).
Let $\AA_R$ be the round cylinder bounded by the round circles.
Then $\beta(R)$ is $\ep$-close to the core of  $\AA_R$ by (\ref{i:NearlyOrthgonal}) since $T$ is sufficiently straight.

 Then by (\ref{i:Equivariant}),  this correspondence between branches $R$ of $\til{T}$ and round cylinders $\AA_R$ is $\rho$-equivariant. 
Moreover, if $T$ is sufficiently straight, 
\begin{enumerate}
\item if $R_1$ and $R_2$ are adjacent  branches of $\til{T}$, then  corresponding round cylinders $\AA_{R_1}$ and $\AA_{R_2}$ are adjacent, 
\item the core of $\AA_R$ has length at least $K/2$,
\item if $P$ is  a stratum of  $(\H^2,  \til{L})$ intersecting a branch $R$ of $\til{T}$, then letting $\PP$ be the corresponding stratum of $(\til{C}, \til{L})$, $\PP$ is $\ep$-nearly orthogonal to the boundary circles of $\AA_R$, and $\beta(P)$ is $\ep$-nearly orthogonal to hyperbolic planes bounded by $\AA_R$. \Label{it:CoreBranch}
\end{enumerate}

Next we shall decompose a subset  $\til{\kap}^{-1}(|\til{T}|)$ of $\til{C}$ into rectangles, producing a traintrack supported on the round cylinders $\RR_R$. 
Suppose that a stratum $P$ of  $(\H^2, \til{L})$  is disjoint from horizontal edges of $R$.
Then,  if $P$ is a geodesic, $P \cap R$ is a geodesic segment, and otherwise $P$ has positive area and $P \cap R$ is a rectangle  bounded by vertical edges of $R$ and two leaves of $\til{L}$.
Let $f = dev(C)$ and let $\PP = f^{-1} (\AA_R) \cap \til{\kap}^{-1}(P)$.
Then, if $T$ is sufficiently straight, then $\PP$ is an $(\ep, K)$-nearly circular rectangle supported on $\AA_R$.
If the boundary of $P$ contains a horizontal edge of $R$, then  $P \cap R$ is a rectangle bounded by vertical edges of $R$, a leaf of $L$ and a smooth segment in $\bd \til{T}$.
Then let $\PP$ be the corresponding component of $\kap^{-1}(P) \cap  f^{-1}(\AA_R)$ that is a rectangle supported on $\AA_R$. 
Let $\RR_R$ be the union of all such $\PP$ over all strata $P$ of $(\H^2, \til{L})$ intersecting $R$.
Then  $\RR_R$ is a rectangle supported on $\AA_\RR$. 
Then (\ref{it:CoreBranch}) implies that $\til{\kap}$ takes $\RR_R$ to $R$ up to an $\ep$-small perturbation of vertical edges.

Then, by applying this construction to all branches $R$ of $\til{T}$, we see that  $\til{\kap}^{-1}(|\til{T}|)$ decomposes into such rectangles $\RR_R$ supported on corresponding round cylinders $\AA_R$.
 Then we obtain  this structure of a traintrack on   $\til{\kap}^{-1}(|\til{T}|)$  that is diffeomorphic to $\til{T}$ and it carries $\til{\LL}$.
  Since it is invariant under $\pi_1(S)$,   we obtained a traintrack $\TT$ on $C$ carrying $\til{\LL}$.

From this construction, in  Theorem \ref{ThmTraintrack}, we have in addition   

\begin{proposition}\Label{CloseToCore}
\begin{enumerate}
\item  If $\ell$ is a leaf of $\til{\LL}$ passing a branch $\RR$ of $\til{\TT}$, then $\ell \cap \RR$ is an $\ep$-circular arc supported on  $\AA_\RR$, and 
$ \beta\ci \til{\kap}$ takes  the arc $\ell \cap \RR$ to a curve $\ep$-close to the core of $\AA_\RR$. 
\item  Similarly if $\ell$ is a smooth segment of $\bdr \til{\TT}$ intersecting $\RR$, then,  $\ell \cap \AA_\RR$ is an $\ep$-nearly circular arc supported on $\AA_\RR$, and
$\beta \ci \til{\kap}$ takes  the arc $\ell \cap \RR$ to a curve $\ep$-close to the core of $\AA_\RR$. 
\end{enumerate}
\end{proposition}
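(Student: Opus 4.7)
The plan is to derive Proposition \ref{CloseToCore} by tracking the construction of $\TT$ summarized in the outline of Theorem \ref{ThmTraintrack}, taking as the main input item (\ref{it:CoreBranch}) of that outline together with the already-established assertion that $\beta(R)$ is $\ep$-close to the core of $\AA_R$. Fix a leaf $\ell$ of $\til\LL$ passing through a branch $\RR = \RR_R$ of $\til\TT$, so $\RR$ is supported on the round cylinder $\AA_R$; let $f$ denote the developing map of $C$. The stratum of $(\til C, \til\LL)$ containing $\ell$ is either $\ell$ itself (when $\ell$ is a one-dimensional stratum) or a two-dimensional plaque having $\ell$ as a boundary leaf; in either case property (\ref{it:CoreBranch}) applies to the corresponding stratum of $(\H^2, \til L)$, yielding that the circular arc $f(\ell \cap \RR)$ meets both boundary circles of $\AA_R$ at angles $\ep$-close to $\pi/2$, and that $\beta \ci \til\kap(\ell)$ meets each of the two hyperbolic planes bounded by $\bd \AA_R$ at angles $\ep$-close to $\pi/2$.

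I would first dispose of the closeness-to-core claim, which is the easier of the two. Since $\til\kap(\ell \cap \RR) \subset R$, and since $\beta(R)$ is $\ep$-close to the core of $\AA_R$ by the outline (exploiting that $R$ is a thin nearly-straight rectangle of width $\geq K$ and that $\beta$ is $1$-Lipschitz on strata), the image $\beta \ci \til\kap(\ell \cap \RR)$ is itself $\ep$-close to the core; after re-labelling $\ep$ this settles the second clauses of both (1) and (2).

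The $\ep$-circularity assertion is where the real content lies. The geometric fact I would isolate is: \emph{the only round circles on $\RS$ meeting the two boundary circles of $\AA_R$ orthogonally are the ``radial'' circles through the two endpoints of the axis of $\AA_R$}; this is a one-line computation in the upper half-space model with the axis sent to the vertical geodesic over $0$, since a non-radial circle orthogonal to $|z| = r$ has the power of the origin equal to $r^2$, and two distinct radii give contradictory values. Such radial circles meet \emph{every} vertical leaf of $\AA_R$ orthogonally. I would then run a quantitative continuity argument on the region separating the boundary planes of $\AA_R$, whose hyperbolic width is $\geq K/2$: an $\ep$-deviation from orthogonality at the two boundary circles forces $f(\ell \cap \RR)$ to differ from a radial arc by $\eta(\ep)$ with $\eta(\ep) \to 0$ as $\ep \to 0$, uniformly in $\AA_R$ once the core-length lower bound $K$ is fixed. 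This simultaneously bounds both the pointwise angles of $f(\ell \cap \RR)$ with vertical leaves of $\AA_R$ and the Euclidean height of a circular rectangle containing $f(\ell \cap \RR)$, so the hypothesis's $\del$ can be chosen small enough that $\eta(\ep) < \ep$. Part (2) follows by the same argument, since by construction any smooth segment of $\bdr \til\TT$ meeting $\RR$ lies on a boundary leaf of a two-dimensional plaque of $(\til C, \til\LL)$, to which property (\ref{it:CoreBranch}) applies verbatim. The main obstacle I foresee is the bookkeeping in this continuity step: ensuring the modulus $\eta(\ep)$ is uniform across all branches and all strata that meet them, which is precisely the role played by the fixed lower bound $K$ throughout the setup of Theorem \ref{ThmTraintrack} and Definition \ref{defn:almostCirc}.
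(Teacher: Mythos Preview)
The paper gives no standalone proof of Proposition \ref{CloseToCore}: it is stated as an immediate byproduct of the construction outlined for Theorem \ref{ThmTraintrack}. Your treatment of the ``close-to-core'' clauses (via $\til{\kap}(\ell \cap \RR) \subset R$ together with $\beta(R)$ $\ep$-close to the core) is correct and is exactly the implicit reasoning the paper relies on. Your rigidity argument for the $\ep$-circularity in part (1) is also correct, though more elaborate than needed: the outline already asserts that each sub-rectangle $\PP$ cut from $\RR$ by the strata of $(\til{C},\til{\LL})$ is an $(\ep, K)$-nearly circular rectangle, and the leaves of $\til{\LL}$ through $\RR$ are precisely the horizontal edges of those $\PP$, so their $\ep$-circularity comes for free.

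Part (2), however, has a genuine gap. A smooth segment of $\bd\til{\TT}$ does \emph{not} lie on a boundary leaf of a two-dimensional plaque of $(\til{C}, \til{\LL})$: by construction $\bd\TT$ descends via $\kap$ to $\bd T$, which is disjoint from $L$, so $\bd\til{\TT}$ sits strictly in the interior of the complementary regions of $\til{\LL}$. Item (\ref{it:CoreBranch}) therefore says nothing directly about the angles that $\bd\til{\TT}$ makes with the boundary circles of $\AA_\RR$; and even if it did, your round-circle rigidity argument would not apply, since these boundary segments are not arcs of round circles. The fix is in fact simpler than your route for part (1): the $\ep$-circularity of $\bd\til{\TT} \cap \RR$ is exactly conclusion (2) of Theorem \ref{ThmTraintrack}, since by Definition \ref{defn:almostCirc} an $(\ep, K)$-nearly circular branch is one whose horizontal edges are $\ep$-circular, and those horizontal edges are precisely the segments of $\bd\til{\TT}$ in $\RR$.
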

\Qed{ThmTraintrack}

\section{The proof of Theorem \ref{t:EpDense}}\Label{s:proof}
Let $C \cong (\tau, L)$ be a projective structure on $S$ in Thurston coordinates.\\
We prove

\vspace{2mm}
\noindent{\bf Theorem \ref{t:EpDense}.}
For every $\ep > 0$, if   $M$  is sufficiently close to $L$ in $\ML$, then for every sufficiently large $t > 0$, 
there is an admissible loop $N$ on $C$, such that, 
 $$d (gr_N(C), gr_{t M}(\tau)) < \ep.$$ 
\vspace{2mm}

Consider the shortest closed geodesic loop on $\tau$, and let $K$ be the one-third of its length. 
Then by  Lemma \ref{l:TrainintrackNbhd}, for every $\ep > 0$,  if $M$ is a measured geodesic lamination on $S$ and it is sufficiently close to $L$, 
 then there is an $(\ep, K)$-nearly straight traintrack $T$ on $\tau$ carrying both $L$ and $M$. 
 As before, by ``sufficiently straight $T$'', we mean that $\ep > 0$ is sufficiently small, while $K$ is fixed, which is realized by taking $M$ sufficiently close to $L$. 
 \\

Let $\LL$ be the circular measured lamination on $C$ that descends to $L$ by the collapsing map $\kap\col C \to \tau$.
If $M$ is sufficiently close to $L$, then by Theorem \ref{ThmTraintrack},  there is in addition an $(\ep, K)$-nearly circular traintrack $\TT$ on $C$ carrying $\LL$ such that  $\TT$ is admissible and descends to $T$, up to $\ep$-small homotopies of vertical edges.\\

For $t \geq 0$, let 
$$C_t = Gr_{tM}(\tau).$$
Since $T$ is $(\ep, K)$-nearly straight traintrack carrying  $M$, if $M$ is sufficiently close to $L$, then
let $\TTT_t$ be the $(\ep, K)$-nearly circular traintrack on $C_t$ that descends to $T$ (Theorem \ref{ThmTraintrack}). 
Let $\{ \RRR_i\}$ denote its branches.

Note that  $\TT$ and $\TTT_t$ are diffeomorphic to $T$ as $C^1$-smooth traintracks via (perturbations of) collapsing maps. 
By Lemma \ref{lem:approx0}, there is a constant $D > 0$, such that for every $t > 0$ there is a multiloop $N\, (= N_t)$ on $C$ carried by $\TT$ such that the weights of $N + L$ and $t M$ are $D$-close on each branch. \\

Since $N$ is carried by the fat traintrack $\TT$ supported by round cylinders, $N$ is admissible, by Theorem \ref{AdmissibleTraintrack}. Let $$C_N =  \Gr_N(C).$$\\
Since $N$ is carried by $\TT$,  thus  $$ \Gr_N(|\TT|)$$ is a subsurface of $C_N$.
Each  branch  $\RR$  of $\TT$ is an $(\ep, K)$-nearly circular rectangle supported on a round cylinder.
Thus $N \cap \RR$ is an admissible multiarc on $\RR$ (see \cite{Baba_10-1}) and the graft of $\RR$ along $N \cap \RR$ is an $(\ep, K)$-nearly circular rectangle supported on the same cylinder. 
With the induced Euclidean metric, this grafts increases the length of each vertical leaf by $2 \pi$ times the number of the arcs of $N \cap R$.
Then  $$ \Gr_N(|\TT|)$$  decomposes into the grafts of each branches $\RR$ of $\TT$, and yields a fat traintrack denoted by  $\TT_N$.   
Therefore  $\TT_N$ is an $(\ep, K)$-nearly circular traintrack diffeomorphic to $\TT$ such that their corresponding branches are supported on the same round cylinders. 

Then there is $\hat{\kap}_N\col C_N \to C$ be the natural collapsing map, such that it collapses grafting cylinders onto $N$ and thus it realizes  the inclusion $C \minus N \sub C_N$. 
Let $\kap_N\col C_N \to \tau$ be the composition of the collapsing maps $\kap_C$ and $\hat{\kap}_N$. 
By Theorem \ref{ThmTraintrack} and the relation between $\TT$ and $\TT_N$, discussed above, we have 
\begin{prop}\Label{p:TN}
For every $\ep > 0$, if $T$ is sufficiently straight, then, for every $t > 0$,
$\TT_N$ is an $(\ep, K)$-nearly circular fat traintrack on $C_N$, such that 
\begin{enumerate}

\item $\kap_N \col C_N \to \tau$ descends $\TT_N$ to $T$ up to a ($\ep$-)small homotopies of vertical edges, 

\item if $R$ is  branch of $T$ and $\RR$ is the corresponding branch of $\TT_N$ supported on the round cylinder $\AA$, then

\begin{enumerate}
\item the length of the core of $\AA$ is $\ep$-close to the width of $R$.
\item  each  vertical leaf of $\RR$ has Euclidean length  $\ep$-close to the weight of $L +N$ on the branch $R$.
\end{enumerate}

\end{enumerate}

\end{prop}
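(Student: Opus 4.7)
The plan is to derive Proposition \ref{p:TN} from Theorem \ref{ThmTraintrack} applied to $C$, combined with a purely local analysis of how $2\pi$-grafting along $N$ modifies each branch of $\TT$. First I would apply Theorem \ref{ThmTraintrack} to $C \cong (\tau, L)$, with the straightness constant of $T$ chosen sufficiently small, to produce an $(\ep, K)$-nearly circular traintrack $\TT$ on $C$ carrying $\LL$ and descending to $T$. For each branch $R$ of $T$, the corresponding branch $\RR$ of $\TT$ is a nearly circular rectangle supported on a round cylinder $\AA$ whose core has length $\ep$-close to the width of $R$ and whose vertical leaves have Euclidean length $\ep$-close to the weight of $L$ on $R$.

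The key local observation is that $2\pi$-grafting along an admissible arc supported on a round cylinder $\AA$ remains supported on $\AA$. Since $N$ is carried by $\TT$, the intersection $N \cap \RR$ is a finite disjoint union of admissible arcs properly embedded in $\RR$ and transverse to the vertical foliation of $\AA$. Each such arc can be $2\pi$-grafted by cutting along it and inserting a copy of $\AA$ cut along a parallel arc, as in \cite{Baba_10-1}. The resulting $\Gr_{N \cap \RR}(\RR)$ is again a rectangle supported on the \emph{same} round cylinder $\AA$, with the same horizontal edges as $\RR$ (hence still $\ep$-nearly circular and with an unchanged core), and with the Euclidean length of each vertical leaf increased by exactly $2\pi$ times the number of arcs in $N \cap \RR$, i.e.\ by the weight of $N$ on $R$ in the measured-lamination convention.

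Assembling these branch-by-branch grafts yields the traintrack $\TT_N$ on $C_N = \Gr_N(C)$, whose branches are in natural bijection with those of $\TT$, and hence of $T$. Property (2a) is immediate because the supporting round cylinders $\AA$ are unchanged by the grafting. Property (2b) follows by summing the vertical length of $\RR$, which is $\ep$-close to the weight of $L$ on $R$, with the additional length $2\pi \cdot \#(N \cap \RR)$ contributed by the graft, producing a quantity $\ep$-close to the weight of $L + N$ on $R$. Since the round cylinders supporting any two adjacent branches of $\TT_N$ coincide with those of $\TT$, the $\ep$-near-concentricity at each switch is inherited from $\TT$, so $\TT_N$ is $(\ep, K)$-nearly circular. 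Finally, the collapsing map $\kap_N = \kap \circ \hat{\kap}_N$ collapses the inserted grafting annuli onto $N$ and then collapses $C$ onto $\tau$; on $|\TT_N|$ this composition maps to $|T|$ up to the $\ep$-small perturbation of vertical edges already permitted by Theorem \ref{ThmTraintrack}, giving (1).

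The only genuine subtlety is confirming that inserting the grafting annuli does not disturb the geometry at switches: two branches meeting at a switch of $\TT$ are supported on $\ep$-nearly concentric round cylinders, and one must ensure that grafting inside either branch leaves those cylinders, and the switch itself, untouched. This is where the local nature of $2\pi$-grafting is essential\,---\,the operation is performed strictly in the interior of $\RR$ along admissible arcs carried by $\RR$, so the vertical edges of the rectangle, the supporting cylinder $\AA$, and the switch points on $\bd \TT$ are all preserved. Consequently no new error is introduced at switches beyond the $\ep$ already present in $\TT$, and taking $T$ sufficiently straight absorbs all accumulated constants into the single parameter $\ep$ required by the proposition.
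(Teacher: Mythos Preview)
Your proposal is correct and follows essentially the same route as the paper: the paper derives the proposition directly from Theorem \ref{ThmTraintrack} applied to $C\cong(\tau,L)$ together with the branch-by-branch observation (stated just before the proposition) that grafting $\RR$ along the admissible multiarc $N\cap\RR$ yields a rectangle supported on the \emph{same} round cylinder, with unchanged horizontal edges and vertical-leaf length increased by $2\pi$ times the number of arcs. Your additional remark that the switch geometry is untouched because the grafting is supported in the interior of each branch is a helpful clarification the paper leaves implicit.
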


We will show the inequality in Theorem \ref{t:EpDense}  by indeed constructing a piecewise-smooth $(1 + \ep)$-quasiconformal map from $C_N$ to $C_t$ that preserves the marking of the surface.
Here is an outline of the construction (for any given $\ep > 0$).   
\begin{enumerate}
\item[\textit{Step 1}.]
 The complement of the traintracks $\TT_N$ and $\TTT_t$ in $C_N$ and $C_t$ are hyperbolic with Thurston metric on $C_N$ and $C_t$, and, there is a natural hyperbolic isometry $$\psi\col  C_t \minus  \TTT_t \to C_N \minus \TT_N.$$ \Label{item:complement}
 \item[\textit{Step 2}.] \Label{item:TraintrackMap}
 We next construct a quasiconformal map  $$\phi\col \TT_N \to \TTT_t$$ that has small distortion if $t > 0$ is sufficiently large. 
 This map is given by natural ``linear" maps between corresponding branches with respect to appropriate Euclidean metrics obtained by slightly modifying the canonical Euclidean metrics. 
 In particular,  there is $H > 0$, 
 if $T$ is sufficiently straight, such that this map is $(1 + \ep, H)$-bilipschitz  for sufficiently large $t > 0$.  \\

\item[\textit{Step 3}.]  \Label{item:adjust}
We modify $\phi$ so that it coincides with $\psi$ along $\bd \TT_N$ so that $\phi$ and $\psi$ define a (continuous) $(1 + \ep, H)$-bilipschitz map  $C_N \to C_t$.

Since $\phi$ is in particular homeomorphism $\TT_N \to \TTT_t$, it restricts to a homeomorphism  $\bdr \phi\col \bdr \TT_N\to \bdr \TTT_t$.
The complements  $C_N \minus \TT_N$ and $C_t \minus \TT_t$ are identified by the hyperbolic isometry $\psi$, which restricts to an isometry from $\bd \TT_N$ to $\bd \TT_t$.
 Then $\bdr \phi$ is an $(1+\ep, \ep)$-bilipschitz map  with the complementary hyperbolic metrics. 
Lastly adjust $\phi\col \TT_N \to \TTT_t$ near $\bdr |\TT_N|$ by postcomposing with an $(1+ \ep, \ep)$-bilipschitz map, so that $\phi$ and $\psi$ match up on the boundary $\bd \TT_N$. \\
\end{enumerate}

Therefore, if $T$ is sufficiently straight and  $t > 0$ is sufficiently large, then we have a $(1+ \ep, K)$-bilipschitz map  $C_N \to C_t$ given by  $\psi$ and  (modified)  $\phi$ with respect to appropriate Euclidean/Hyperbolic metrics.

\subsection{Step 1: Isometry between the complements of traintracks}

Recall that the collapsing maps $\kap \col C \to \tau$ and $\kap_t\col C_t \to \tau$ take $|\TT|$ and $|\TTT_t|$ to the nearly straight traintrack $|T|$ carrying both measured laminations $M$ and $L$.
Thus Thurston metrics on $C_t \minus \TT_t$ and  $C \minus \TT$ are hyperbolic, and moreover
$C_t \minus \TT_t$ is  isometric to $C \minus \TT$ via $\kap^{-1} \ci \kap_t$.

Recall the collapsing map $\hat{\kap}_N\col C_N \to C$.
Since $N$ is carried by $\TT$, thus $\hat{\kap}_N$ restricts to an isomorphism from $C_N \minus \TT_N$ to $C \minus \TT$ as projective surfaces.
Then
$C_N \minus \TT_N$ enjoys a hyperbolic metric obtained by pulling back the hyperbolic metric on $\tau$ by the composition of collapsing maps, $\kap_N = \kap \circ \hat{\kap}_N \col C_N \to \tau$.
Then
let $\psi\col C_N \minus \TT_N \to  C_t \minus \TTT_t$ be the natural hyperbolic isometry give by $\kap_t^{-1} \ci \kap_N$.

\subsection{Step 2: Linear map between traintracks}\Label{s:map}

\subsubsection{Linear maps between rectangles supported on round cylinders}\Label{defRectangleMap}

Let $\AA, \AA' $ be round cylinders on $\RS$. 
Let $R$ and $R'$ are projective structures on a rectangle supported on $\AA$ and $\AA'$, respectively.
Then there are canonical Euclidean metrics $E$ on $R$ and $E'$ on $R'$ by Definition \ref{DefEuclideanRectangle}. 
Then  there is a unique diffeomorphism $\xi\col R \to R'$ such that
\begin{itemize}
\item $\xi$ preserves the left vertical edge, right vertical edge, and horizontal edges, 
\item $\xi$ takes each vertical leaf of $R$ onto a vertical leaf of $R'$ linearly; thus $\xi$ induces a continuous map $\xi_\ast$  in the horizontal coordinate, and 
\item $\xi_\ast$  is also linear. 
\end{itemize}

Then
\begin{lemma}\Label{lem:CompareBranches}
For every $\ep > 0$ and $K >0$, there is $\del > 0$, such that, if

\begin{enumerate}
\item   $R$ and $R'$ are $(\del, K)$-nearly circular, \Label{it:NearlyCircular}
\item $\del$ bounds, from above, the difference of the lengths of the cores of $\AA$ and $\AA'$, and
\item  there are constants $V, V' > 1/\del$ with $-K/2 < V - V' < K/2$, such that, for  all vertical leaves of $\ell$ and $\ell'$ of $R$ and $R'$, respectively, \Label{i:height} 
\end{enumerate}

$$ -\del <  \length_{E}(\ell) - V < \del  $$
$$-\del  < \,\length_{E'}(\ell') - V'\, < \del,$$

\Label{it:AlmostConstant}

then $\xi\col R \to R'$ is a smooth  $(1 + \ep, K)$-bilipschitz map.
\end{lemma}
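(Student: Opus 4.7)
The plan is to work in the canonical flat coordinates on $R$ and $R'$ pulled back from the unfolded universal covers of the supporting round cylinders, in which $\xi$ becomes an explicit nearly-affine map with linear part close to the identity.

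Put coordinates $(u, v)$ on $R$ with $u$ axial (so $\FF$-leaves are $\{u = \text{const}\}$) and $v$ circumferential. Then the vertical edges of $R$, lying on the two boundary circles of $\AA$, become the lines $u = 0$ and $u = W$ where $W$ is the core length of $\AA$; the horizontal edges are graphs $v = h_\pm(u)$. The $(\del, K)$-nearly circular hypothesis gives $|h_\pm'(u)| = O(\del)$ and total variation of $h_\pm$ bounded by $\del$, so $H(u) := h_+(u) - h_-(u)$ is $\del$-close to the constant $V$ of hypothesis (3). Introducing analogous coordinates $(u', v')$ and functions $\tilde h_\pm, \tilde H$ on $R'$, the map becomes
\[
\xi(u, v) = \bigl(\alpha u,\ \tilde h_-(\alpha u) + \tfrac{\tilde H(\alpha u)}{H(u)}\,(v - h_-(u))\bigr), \qquad \alpha = W'/W.
\]

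For the $(1+\ep)$-bilipschitz part, I would estimate the Jacobian, which is lower-triangular with diagonal entries $\alpha$ and $s(u) := \tilde H(\alpha u)/H(u)$. From $|W - W'| < \del$ and $W \ge K$ one gets $|\alpha - 1| \le \del/K$; from $|V - V'| < K/2$, $|H - V|, |\tilde H - V'| \le \del$, and $H \ge 1/\del - \del$ one gets the sharp bound $|s - 1| \le (K/2 + 2\del)/H = O(K\del)$. The off-diagonal entry decomposes as $\alpha \tilde h_-'(\alpha u) + s'(u)(v - h_-(u)) - s(u)\, h_-'(u)$; each term is $O(\del)$, the middle one exploiting $|v - h_-(u)| \le H(u)$ to cancel the $1/H$ that appears in $s'(u)$. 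Hence all singular values of $d\xi$ lie within $O(\del/K + K\del)$ of $1$, and choosing $\del$ small (with $K$ fixed) gives the pointwise bilipschitz estimate.

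For the $K$-rough-isometry, I would decompose $\xi(x) - \xi(y) - (x - y)$ componentwise. The $u$-component is $(\alpha - 1)(u_1 - u_2)$, of magnitude at most $|W - W'| < \del$. The main $v$-component is $(s(u) - 1)(v_1 - v_2)$; the sharp $1/H$ decay of $|s - 1|$ against $|v_1 - v_2| \le V + O(\del)$ yields $|(s - 1)(v_1 - v_2)| \le K/2 + O(\del)$ uniformly in $V$. The boundary variation of $\tilde h_-$ and $h_-$ contributes an extra $O(\del)$, so the total additive error in Euclidean norm is at most $K/2 + O(\del) < K$ for $\del$ sufficiently small. The main subtlety is exactly this last cancellation: if one used only the crude bound $|s - 1| = O(K\del)$, then $(s - 1)(v_1 - v_2)$ could grow linearly with $V$; it is the assumption $V, V' > 1/\del$ (together with $|V - V'| < K/2$) that forces $|s - 1|$ to scale like $1/V$, keeping the product bounded by $K/2 + O(\del)$.
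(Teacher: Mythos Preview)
Your argument is correct and is essentially the paper's proof written out in explicit flat coordinates: both separate the horizontal and vertical behavior of $\xi$, bounding $\alpha = W'/W$ and $s(u) = \tilde H/H$ near $1$ and the off-diagonal distortion by $O(\del)$, and both hinge on the same cancellation $|s-1|\cdot H \le K/2 + O(\del)$ for the rough-isometry bound. The paper phrases the off-diagonal estimate geometrically (interpolating the tangent vectors $u,w$ at the two horizontal edges along a vertical leaf, as in its figure) rather than via your Jacobian formula, and it asserts the $K$-rough-isometry in one line from the directionwise estimates where you actually expand $\xi(x)-\xi(y)-(x-y)$; the content is the same.
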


\Proof
First we consider $\xi$ in  the vertical direction. 
 If $\del > 0$  is sufficiently small, by (\ref{i:height}), for all  vertical leaves $\ell$ of $R$ and $\ell'$ of $R'$ corresponding by $\xi$,
  $\length_E (\ell)$ and $\length_{E'}(\ell')$ are  sufficiently large and $- K < \length(\ell) - \length(\ell') < K$.
Thus  $\xi| \ell$ is  a smooth $(1 + \ep, K)$-bilipschitz map for all vertical leaf $\ell$ of $R$.

Next we consider in the horizontal direction. 
Let $W$ and $W'$ denote the Euclidean widths of $R$ and $R'$, respectively. 
Then $\xi_\ast\col [0,W] \to [0, W']$ denotes the linear map induced by $\xi$. 
Then, if $\del > 0$ is sufficiently small, since $|W - W'| < \del$ and $W, W' > K$, then $\xi_\ast$ is a $(1 + \ep, \ep)$-bilipschitz map.

\begin{figure}[H]
\begin{overpic}[scale=.2,
] {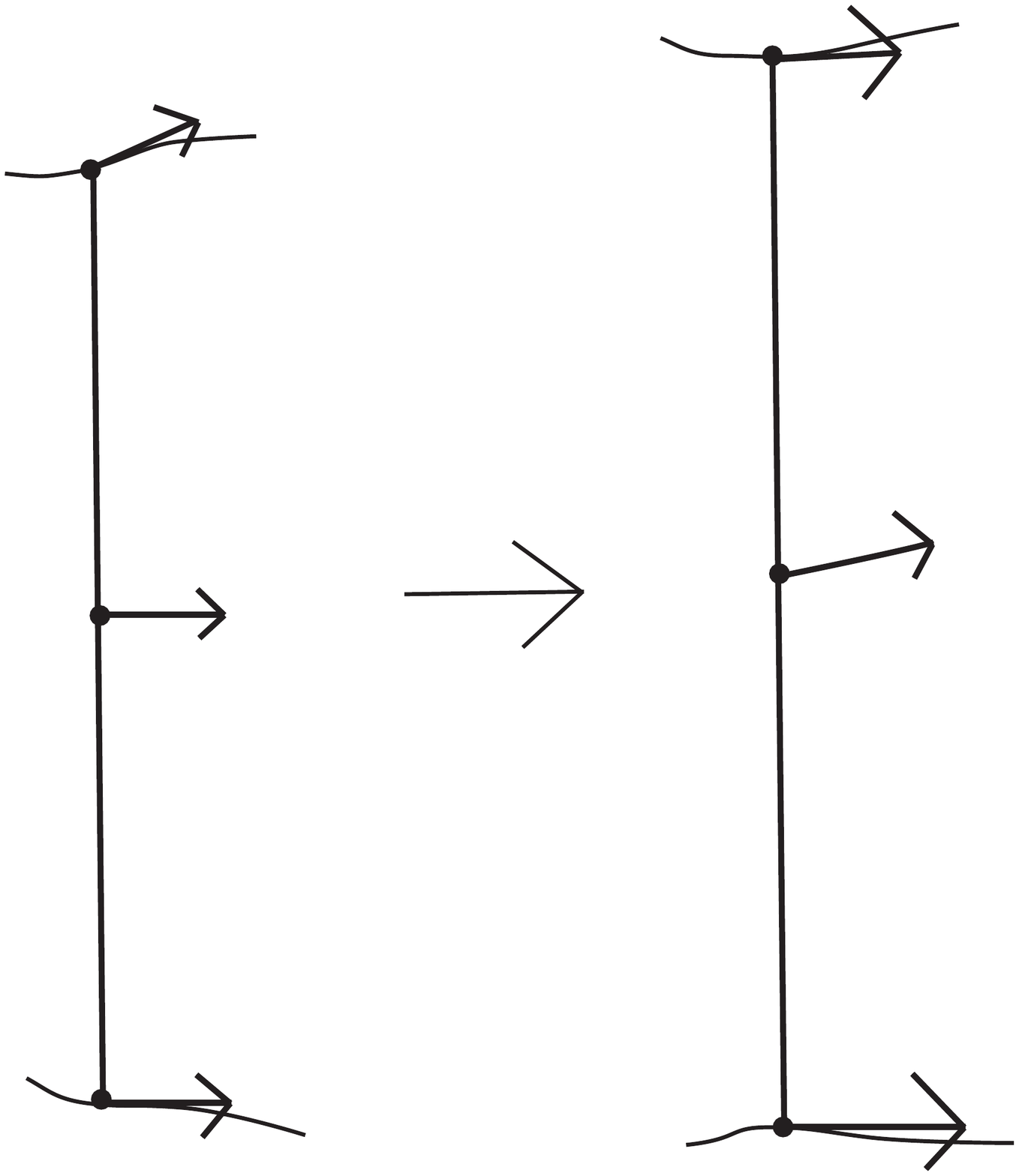} 
 \put(8,34){$\ell$}
\put(15, 50){$v$}
\put(35, 53){$\xi$}
\put(60, 54){$d \xi (v)$}
\put(10, 80){$u$}
\put(10, 8){$w$}
\put(50, 86){$d\xi(u)$}
\put(50, 4){$d\xi(w)$}
      \end{overpic}

\caption{}\label{fig:horizontal}

\end{figure}
Let $v$ be a horizontal unit tangent vector at a point $p$ in $R$. 
Let $\ell$ be the vertical leaf of $R$ passing $p$.
Let $u$ and $w$ be the unit tangent vectors tangent to the horizontal edges of $R$ at the endpoints of $\ell$, such that $u, w$ point  the same direction as $v$ in the horizontal coordinate. 

Then if $\del > 0$ is sufficiently small, since $R$ is $(\del, K)$-circular,  $u$ and  $w$ are $\ep$-{\it nearly horizontal}, i.e. the angles between those vectors and appropriate horizontal vectors are less than $\ep$. 
Since $R'$ is $\del$-nearly circular, we can in addition assume that $d \xi (u)$ and $d \xi(w)$ are $\ep$-nearly horizontal.
Notice that $d \xi | \ell$ induces a single affine map from the plane tangent to $\ell$ to the plane tangent to $\xi(\ell)$.
 Thus $d \xi(v)$ is $\ep$-nearly horizontal as well. 
Therefore, if $\del > 0$ is sufficiently small, $1-\ep < |d \xi (v)|  < 1 + \ep$ for all horizontal unit tangent vectors $v$ at points in $R$, since $\xi_\ast$ is a smooth bilipschitz map with small distortion. 
In addition, since $\xi$ is,  in the vertical direction,  a smooth bilipschitz map with small distortion,  $d \xi$ is in particular $(1 + \ep)$-quasiconformal (\textit{i.e.} distorts angles by a factor of at most $1 + \ep$).  

If $\del > 0$ is sufficiently small,  since $\xi$ is a bilipschitz map  of small distortion  in both vertical and horizontal directions, $\xi$ is $(\ep + 1)$-bilipschitz map.  
We have seen that $\xi$ is $K$-rough isometric in the vertical direction and $\ep$-isometric in the horizontal direction. 
Thus if $\del > 0$ is sufficiently small, then $\xi$ is a $K$-rough isometry.  
 \Qed{lem:CompareBranches}

\subsubsection{Linear maps between traintracks}\Label{sMapOnBranches}

Recall that we have traintracks $\TT_N$ on $C_N$ and $\TT_t$ on $C_t$ that are isomorphic as smooth fat traintracks, and their branches are supported on round cylinders.  
Then if $\RR_i$ and $\RRR_i$ are corresponding branches of $\TT_N$ and $\TTT_t$, then let  $\xi_i\col\RR_i \to \RRR_i$ be the diffeomorphism defined in \S \ref{defRectangleMap}.
Then let $\xi  = \{\xi_i \}_i$. 
Although  $\xi_i$'s typically do not match up along vertical edges for adjacent branches of the traintracks (see \S \ref{s:ModityEuclidean}),  
Nonetheless we may regard $\xi$ as a map $\TT_N \to \TTT_t$ that is continuous except along vertical edges.

We check the hypotheses in Lemma \ref{lem:CompareBranches} to $\xi_i$.
Let $R$ be a branch of $T$ on the hyperbolic surface $\tau$.
Let $\RR_N$ and $\RRR_t$ be the  branches of $\TT_N$ and $\TT_t$ corresponding to $R$.
Let $\AA_N$ and $\AA_t$ be the round cylinders supporting  $\RR_N$ and $\RRR_t$, respectively.  \\

Then, there is $K > 0$, such that,  for every $\del > 0$, if $T$ is sufficiently straight, then $\TT_N$ and $\TTT_t$ are $(\del, K)$-nearly circular (Theorem \ref{ThmTraintrack} (2)), which is Assumption (1) in Lemma \ref{lem:CompareBranches}. 
In addition, by Theorem \ref{ThmTraintrack} (1 -a),  the lengths of the cores of $\AA_N$ and $\AA_t$ are $\ep$-close, regardless of the choice of $R$, which is (2) in Lemma \ref{lem:CompareBranches}.

For every $\del  > 0$, if $T$ is sufficiently straight, then by Theorem \ref{ThmTraintrack} (1 -b) and Proposition \ref{p:TN}, 
\begin{itemize}
\item each vertical leaf of each $\RR_N$ has length $\del $-close to the weight of $L _ N$ on $\RR_N$, and
\item each vertical leaf of each $\RRR_t$ has length $\del $-close to the weight of $t N$ on $\RRR_t$.\\
\end{itemize}

Recall that $N = N_t$ is chosen so that the differences of the weights of $L + N$ and $M$ on corresponding branches  are uniformly bounded by the constant $D > 0$ independent on $t$. 

In addition, if $t > 0$ is sufficiently large, weights of $t M$ and $L + N$ are at least $1/\del$ on all branches of the traintracks.
Thus  Lemma \ref{lem:CompareBranches} (\ref{i:height}) holds.
Therefore, by Lemma \ref{lem:CompareBranches}, 

\begin{proposition}\
There is $H > 0$  such that, given any $\ep > 0$, if $T$  is sufficiently straight, then for all sufficiently large $t > 0$, 
$\xi_i\col \RR_i \to \RRR_i$ is a smooth  $(1 + \ep, H)$-bilipschitz map  for each branch $R_i$ of $T$ with respect to the Euclidean metrics $E_N, E_t$ induced by their supporting round cylinders.  
\end{proposition}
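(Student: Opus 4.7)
The plan is to apply Lemma \ref{lem:CompareBranches} branch-by-branch to the natural linear maps $\xi_i\col \RR_i \to \RRR_i$, extracting the constant $H$ from the uniform gap constant $D$ of Lemma \ref{lem:approx0}. Concretely, I would fix $H$ large enough to dominate both the geometric constant $K$ (the one-third of the shortest geodesic on $\tau$) and twice the constant $D$, for instance $H := \max(K,\, 2D+1)$, so that $H$ is determined by the initial data $(C,M)$ alone and is independent of $\ep$ and $t$. Given $\ep > 0$, I would then extract $\delta = \delta(\ep, H) > 0$ from Lemma \ref{lem:CompareBranches} and choose the straightness of $T$ and the size of $t$ accordingly.

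Verifying the three hypotheses of Lemma \ref{lem:CompareBranches} is then a matter of invoking results already in place. Near-circularity (hypothesis 1) is supplied by Theorem \ref{ThmTraintrack}(2) applied to $C_t$ together with Proposition \ref{p:TN} applied to $C_N$, once $T$ is chosen straight enough. The core-length comparison (hypothesis 2) follows from Theorem \ref{ThmTraintrack}(1-a), which forces the cores of both $\AA_N$ and $\AA_t$ to be $\delta/2$-close to the Euclidean width of the common branch $R$ of $T$; hence their lengths are $\delta$-close. For the vertical-leaf condition (hypothesis 3), I set $V$ equal to the weight of $L+N$ on $R$ and $V'$ equal to the weight of $tM$ on $R$. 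Proposition \ref{p:TN}(2-b) and Theorem \ref{ThmTraintrack}(1-b) then give that every vertical leaf of $\RR_i$ has $E_N$-length $\delta$-close to $V$, and every vertical leaf of $\RRR_i$ has $E_t$-length $\delta$-close to $V'$. The construction of $N$ via Lemma \ref{lem:approx0} yields $|V - V'| < D \leq H/2$, and the lower bound $V, V' > 1/\delta$ holds once $t$ is sufficiently large, since $V' = t\cdot w(M,R)$ grows linearly in $t$ while $V \geq V' - D$.

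The main point requiring care is the ordering of quantifiers: $H$ must be fixed before $\ep$, yet the straightness parameter $\delta$ depends on $\ep$ (and on $H$). This is reconciled by noting that the constant $D$ of Lemma \ref{lem:approx0} depends only on the fixed data $(\tau, L, M)$ and is uniform in $t$, and that subsequently refining $T$ to be straighter does not enlarge $D$. With all three hypotheses verified, Lemma \ref{lem:CompareBranches} produces the desired smooth $(1 + \ep, H)$-bilipschitz map $\xi_i\col \RR_i \to \RRR_i$ for every branch $R_i$. The hard part is essentially finished once the uniform-in-$t$ bound on $|V-V'|$ from Lemma \ref{lem:approx0} is matched against the bookkeeping role of the constant $K$ in Lemma \ref{lem:CompareBranches}; everything else is a direct translation of results already recorded in the preceding subsections.
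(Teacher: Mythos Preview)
Your proposal is correct and follows essentially the same route as the paper: both arguments verify the three hypotheses of Lemma \ref{lem:CompareBranches} branch-by-branch, citing Theorem \ref{ThmTraintrack} and Proposition \ref{p:TN} for near-circularity and core-length comparison, and invoking the uniform bound $D$ from Lemma \ref{lem:approx0} together with the growth of $tM$ for the vertical-leaf condition. You are somewhat more explicit than the paper about how $H$ is manufactured from $K$ and $D$ and about the quantifier ordering, but the underlying argument is the same.
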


\subsubsection{Matching along adjacent branches}\Label{s:ModityEuclidean}
  The map $\xi\col \TT_N \to \TTT_t$ is  defined between corresponding branches (\S \ref{sMapOnBranches}), if $T$ is sufficiently straight,  it is an $(1 + \ep, K)$-bilipschitz map.
 Typically, on adjacent branches of $\TT_N$,  $\xi$ do not agree along on their common vertical edge:  Their supporting round cylinders may not be concentric, and the canonical euclidean metrics on the round cylinders do no agree along their common boundary circle.
Nonetheless those round cylinders are nearly concentric, and therefore $\xi$ is ``close" to being continuous along vertical edges. 
In this section, we modify $\xi$ so that it is in addition continuous and yet  $(1+\epsilon,K)$-bilipschitz. \\

More generally let $\AA_1$ and $\AA_2$ be adjacent round cylinders on $\RS$, which share a boundary circle $\ell$. 
First we make them concentric  by applying a natural M\"{o}bius transformation to  $\AA_2$, keeping them adjacent:
Let  $c_1$ and $c_2$ be the cores of $\AA_1$ and $\AA_2$, respectively.
 Let $P$ be the hyperbolic plane in $\H^3$ bounded by $\ell$, and let $p_1$ and  $p_2$  be the end points of $c_1$ and $c_2$ on $P$.
 Let $\gam \in \PSL$ be such that $\gam$ restricts to a unique translation of $P$ with $\gam(p_2) = p_1$  along the geodesic $g$ connecting $p_1$ and $p_2$.
 Then $\AA_1$ and $\gam \AA_2$ are still adjacent along $\ell$, and $\AA_1$ and $\gam \AA_2$ are indeed concentric. 
Note that $\gam$, being a M\"{o}bius transformation, preserves the canonical Euclidean metric on $\AA_2$.\\

Let $\AA_2' = \gam \AA_2 \sub \RS$. 
Let $c_2' = \gam c_2 \sub \H^3$, the core of $\AA_2'$.
Next we shall modify $\gam$ by post-composing with a piecewise differentiable homeomorphism  $\eta \col \AA_2' \to \AA_2'$ such that $\eta \ci \gam$ restricts to the identity map on $\ell$.
Let $w$ be the length of $c'_2$.
Then parametrize $c_2'$ by arclength as  $c_2' \col [-\frac{w}{2} , \frac{w}{2}] \to \H^3$ so that  the endpoint $c_2^\prime(w/2)$ lies in  $P$.\\

Let $\Conv \AA_2'$ denote the convex hull of $\AA_2'$ in $\H^3$, which is homeomorphic to a solid cylinder. 
For each $t \in [-w/2, w/2]$,  let $P_t$ be the hyperbolic plane  orthogonal to $c_2'$ at  $c_2'(t)$.
Then  $\{P_t\}$  foliates $\Conv \AA_2'$.
Assigning an orientation on the geodesic $g$ in the direction from $p_1$ to $p_2$, 
let $g_t$ be the oriented geodesic on $P_t$ obtained by parallel transport of $g$ along $c_2'$.
Let $\eta_t $ be the translation of the hyperbolic plane $P_t$ along $g_t$ by $d_P (p_2, p_1) \cdot \frac{2t}{w}$ for $t \in [0, \frac{w}{2} ]$ and $\eta_t$ be the identity map for $t \in [-\frac{w}{2}, 0 ]$.
This one-parameter family of sometimes $\eta_t$ yields a piecewise differentiable homeomorphism $\eta \col \Conv \AA_2' \to \Conv \AA_2'$.  
Then $\eta$ restricts a piecewise differentiable homeomorphism  $\eta\col\AA_2^\prime \to \AA_2^\prime$. 
Indeed the composition $\eta \ci \gam\col \AA_2 \to \AA_2'$ fixes the boundary circle $\ell$ pointwise and $\eta \ci \gam (\AA_2) = \AA_2'$, which is concentric to $\AA_1$.

If a projective structure on  a rectangle $R$ is supported on $\AA_2$, then by post-composing  $dev(R)$ with $\eta \ci \gam$, we obtain a new projective structure on $R$. 
Then accordingly we obtained a new Euclidean metric on $R$.

Next we see that this new Euclidean metric is very close to the original Euclidean metric if round cylinders are nearly concentric. 
First, with respect to the canonical Euclidean metric on $A_2'$, we have
\begin{lemma}\label{alpha}
For every $\ep > 0$,  there is $\del > 0$ such that if the length $w$ of the core of $\AA_2'$  is more than $\ep$ and the  endpoints $p_1$ and $p_{2}$ are $\del$-close,  then $\eta\col \AA_2' \to \AA_2'$ is smoothly $(1 + \ep, \ep)$-bilipschitz  and $\ep$-close to the identity map as a piecewise smooth map.
\end{lemma}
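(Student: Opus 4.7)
The plan is to analyze $\eta$ in explicit Euclidean coordinates on $\AA_2'$ in which each vertical circle is preserved setwise and $\eta$ acts on it by a M\"obius transformation close to the identity. Let $(t,\theta)\in[-w/2,w/2]\times(\mathbb{R}/2\pi\mathbb{Z})$ be flat Euclidean coordinates on $\AA_2'$, where $t$ is the horizontal Euclidean arclength (which by the remark after Definition~\ref{def:EuclideanCylinder} coincides with hyperbolic arclength along the axis) and $\theta$ is the Euclidean arclength along each vertical circle (each of length $2\pi$). The vertical circle at $t$ is exactly $\bd_\infty P_t$, and since each $\eta_t$ is an isometry of $P_t$, $\eta$ preserves every vertical circle and acts on it as a M\"obius transformation $\phi_t\col\bd_\infty P_t\to\bd_\infty P_t$ induced by the hyperbolic translation of $P_t$ along $g_t$ by distance $s(t):=d_P(p_1,p_2)\cdot\max(0,2t/w)\le\del$.

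Identifying $P_t$ with the Poincar\'e disk so that $\bd_\infty P_t$ is the boundary circle of length $2\pi$, a Taylor expansion of the translation $z\mapsto (z+\tanh(s/2))/(1+\tanh(s/2)z)$ gives
\[
\phi_t(\theta)=\theta-s(t)\sin(\theta-\theta_0(t))+O(s(t)^2),
\]
where $\theta_0(t)$ is the $\theta$-position of the fixed points of $g_t$ on $\bd_\infty P_t$. The key geometric input is that $\theta_0(t)$ is independent of $t$: working in the upper half-space model with $c_2'$ as the vertical axis, the Christoffel-symbol computation shows that parallel transport along $c_2'$ merely rescales all Euclidean components of a tangent vector by the height ratio, and so preserves the horizontal Euclidean direction. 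Since $g_t$ passes through $c_2'(t)$ with the parallel transport of the tangent of $g$ at $p_1$, the $\theta$-coordinates of the endpoints of $g_t$ on $\bd_\infty P_t$ are constant in $t$, so $\theta_0'(t)\equiv 0$.

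With $\eta(t,\theta)=(t,\phi_t(\theta))$, the Jacobian is
\[
J_\eta=\begin{pmatrix} 1 & 0\\ \bd_t\phi_t & \bd_\theta\phi_t\end{pmatrix},
\]
and the above gives $|\bd_\theta\phi_t-1|=O(\del)$ together with $|\bd_t\phi_t|\le |s'(t)|+O(s(t)\,s'(t))\le 2\del/w+O(\del^2/w)\le 2\del/\ep+O(\del^2/\ep)$, using the hypothesis $w>\ep$. Thus $J_\eta$ is $O(\del/\ep)$-close to the identity on each of the two smooth pieces $\{t<0\}$ and $\{t>0\}$; the only non-smoothness is the jump of $\bd_t\phi_t$ at $t=0$, which is permitted by the piecewise smooth framework. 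Choosing $\del$ small enough relative to $\ep$ then yields the $(1+\ep)$-bilipschitz claim, and since $|\phi_t(\theta)-\theta|\le s(t)\le\del$, we also get $|\eta(x)-x|\le\del$ on $\AA_2'$, which gives both the $\ep$-closeness to the identity and, by the triangle inequality, the $\ep$-rough-isometry half of the $(1+\ep,\ep)$-bilipschitz conclusion.

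The main obstacle is verifying the constancy of $\theta_0(t)$: without it, $\bd_t\phi_t$ would acquire an additional term $s(t)\,\theta_0'(t)\cos(\theta-\theta_0(t))$ whose smallness is not transparent, and one would need a separate estimate controlling the rate at which parallel transport along $c_2'$ rotates the endpoints of $g_t$ on $\bd_\infty P_t$. Everything else amounts to routine bookkeeping with the explicit estimates $s(t)<\del$ and $s'(t)<2\del/\ep$.
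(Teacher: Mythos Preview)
Your proof is correct and follows the same strategy as the paper's: control the distortion of $\eta$ on each vertical circle $\partial P_t$ by the translation distance $s(t)\le\delta$, control the $t$-derivative by $s'(t)\le 2\delta/w\le 2\delta/\epsilon$, and combine these to bound the Jacobian. Your explicit upper-half-space verification that $\theta_0(t)$ is constant is exactly the content of the paper's terse remark ``Notice that $P_t$ are naturally identified,'' which is the point on which the bound $|\partial_t\phi_t|=O(s'(t))$ hinges; your treatment simply makes this step more transparent.
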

\begin{proof}
Let 
$$\displaystyle X = \bigcup_{0 \leq t \leq w/2} \bd P_t ,$$
be the half of the cylinder $\AA'_2$ where $\eta$ is supported. 
Clearly it suffices to show the lemma on $X$. 
 
If  $d_P(p_1, p_2) / w$ is sufficiently small, then for each $t \in [0, w/2]$, $\eta_t$ is smoothly $\ep$-close to the identity map and in particular $(1 + \ep, \ep)$-bilipschitz. 
Therefore, since $\eta$ preserves $P_t$'s,  if $\del > 0$ is sufficiently small, then $\eta$ is $\ep$-rough isometry of $X$. 

Next we show that $\eta$ is $(1 + \ep)$-bilipschitz.
We saw that $d \eta$ is $(1 + \ep)$-bilipschitz in  $\bd P_t$ for each $t$.
Notice that $P_t$ are naturally identified. 
Then, if $\del > 0$ is sufficiently small,  then, for each $t \in [0, \pi/2]$ and $x \in \bd P_t$, we have  $\frac{d \eta_t}{d t}(x) < \ep$.
Thus, since $\eta$ preserves $t$-coordinates, every horizontal tangent vector on $X$ stays $\ep$-almost horizontal and, in terms of its length,  $(1+ \ep)$-bilipschitz.
Therefore, $d \eta$ is bilipschitz in both $t$ and $\bd P_t$ directions and thus $(1 + \ep)$-quasiconformal.
 Hence, for sufficiently small $\del > 0$, $\eta$ is  $(1 + \ep)$-bilipschitz. 
\end{proof}

Given a traintrack  $\TT$ supported on round cylinders, we can apply the modification above for all adjacent branches, so that the supporting cylinders are all concentric. 
Then all supporting cylinders are contained in $\RS$ minus two points, the end points of their common axis. 
Then, letting $\til{\TT}$ be the universal cover of $\TT$, there is a modified developing map from $\til{\TT}$ to $\RS$ minus two points that is equivariant under a representation of $\pi_1(\TT)$ onto a subgroup of $\PSL$ consisting of loxodromic elements fixing the common axis. 
Thus we have a continuous Euclidean metric on $\TT$ induced by this modified developing map ({\it modified Euclidean metric}). 
Then Lemma \ref{alpha} implies
\begin{proposition}\Label{pModifyT}
For every $\ep > 0$, if $\del > 0$ is sufficiently small, then 
given any $(\del, \ep)$-nearly circular traintrack supported on round cylinders, the modified Euclidean metric is $(1 + \ep, \ep)$-bilipschitz to the original Euclidean metric on the traintrack on each branch.  
\end{proposition}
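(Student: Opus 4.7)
The plan is to derive Proposition \ref{pModifyT} from Lemma \ref{alpha} by reducing the metric comparison on each branch to a single shear estimate. Fix a branch $\RR$ of $\TT$ with original supporting round cylinder $\AA$. In the modification procedure, $\AA$ is replaced by $\AA' := \eta \ci \gam(\AA)$, where $\gam$ is the M\"obius translation aligning $\AA$'s axis with that of the already-modified neighbor, and $\eta$ is the piecewise-smooth shear constructed just above Lemma \ref{alpha}. The original and modified Euclidean metrics on $\RR$ are the pullbacks under $dev(\RR)$ of the canonical Euclidean metrics on $\AA$ and $\AA'$, respectively.

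The first key observation is that any M\"obius transformation between two round cylinders intertwines their canonical Euclidean metrics. Indeed the canonical metric on any round cylinder is uniquely determined by the conformal structure of $\RS$ minus the two axis endpoints together with the normalization that vertical circles have length $2\pi$, and both of these data transform compatibly under $\gam$. Consequently the $\gam$-part of the modification contributes no bilipschitz distortion, and the entire discrepancy between the original and modified Euclidean metrics on $\RR$ is carried by the shear $\eta$ alone. Lemma \ref{alpha} directly bounds the bilipschitz constant of $\eta$ by $(1+\ep, \ep)$ once two hypotheses are met: the core length of $\AA'$ exceeds $\ep$, which is immediate from the $(\del,\ep)$-near circularity of $\TT$; and the two core endpoints $p_1, p_2$ on their shared hyperbolic plane are $\del'$-close for some $\del'$ that can be made small by shrinking $\del$.

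The main obstacle is verifying this closeness hypothesis uniformly over every branch, because by the time $\AA$ is modified its relevant neighbor has itself already been modified, so $p_1$ refers to a displaced core endpoint rather than to the one directly controlled by the original traintrack hypothesis. To handle this I would order the branches by BFS depth from a fixed root and argue inductively: in the previous modification step, the shear $\eta_{\text{prev}}$ is the identity on the far boundary plane shared with $\AA$, so only $\gam_{\text{prev}}$ acts there; since $\gam_{\text{prev}}$ is a hyperbolic translation of $\H^3$ whose translation length equals the previous step's near-concentricity distance, it displaces the far core endpoint by at most a bounded multiple of that distance. Hence the near-concentricity distance degrades additively by $O(\del)$ at each BFS step, and since the number of branches of $\TT$ is bounded by a constant depending only on the topology of $S$, it remains of order $\del$ throughout. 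Choosing the original $\del$ sufficiently small, Lemma \ref{alpha} then yields the required $(1+\ep, \ep)$-bilipschitz comparison on each branch, which is exactly Proposition \ref{pModifyT}.
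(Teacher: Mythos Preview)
Your approach is the paper's: the authors' entire proof of Proposition~\ref{pModifyT} is the single clause ``Then Lemma~\ref{alpha} implies\ldots,'' and you have correctly unpacked what that clause means --- the M\"obius part $\gamma$ intertwines the canonical Euclidean metrics exactly, so the only distortion on a given branch comes from the shear $\eta$, which Lemma~\ref{alpha} controls. One small wording slip: the modified metric on $\RR$ is the pullback of $g_{\AA'}$ under the \emph{modified} developing map $\eta\circ\gamma\circ dev$, not under $dev(\RR)$ itself (indeed $dev(\RR)\subset\AA$, not $\AA'$). Your subsequent reasoning makes clear you understand this, but the sentence as written is not correct.

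Your accumulation analysis is more elaborate than necessary. The key point you are close to but do not quite state is that the cumulative pre-modification carried to a branch at depth $n$ is a composition of M\"obius maps $\gamma_{n-1}\circ\cdots\circ\gamma_1$, hence an isometry of $\H^3$. When you pass from $\RR_{k}$ to $\RR_{k+1}$ across the \emph{far} boundary of $\RR_k$, both relevant core endpoints on the shared plane $P_{k,k+1}$ are transported by the \emph{same} isometry, so their distance is exactly the original near-concentricity distance, not merely $O(\delta)$-close to it. There is therefore no genuine degradation along such steps, and no need to invoke a topological bound on the number of branches. (At a trivalent switch where the path enters and exits the long-edge branch through the same boundary circle, one picks up a single factor of $2$ by the triangle inequality, since the two short-edge branches are not themselves adjacent in the sense of the definition; but this does not propagate further.) Your BFS bound is not wrong --- it gives a correct, if loose, estimate --- but the cleaner observation makes the reduction to Lemma~\ref{alpha} immediate and uniform in $t$ and in the combinatorics of $\TT$, which is closer to why the paper is content with a one-line proof.
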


Now we apply to this modification to our setting. 
Let $E_N'$ be the continuous Euclidean metric on $\TT_N$ obtained by making supporting cylinders concentric as above. 
Similarly let $E_t'$ be the continuous Euclidean metric on $\TTT_t$.
Then, by Proposition \ref{pModifyT}, for every $\ep > 0$, if $T$ is sufficiently straight, then $E_N'$ and $E_t'$ are piecewise smoothly $(1 + \ep, \ep)$-bilipschitz to $E_N$ and $E_t$, respectively. 
Therefore,  for every $\ep > 0$, if $T$ is sufficiently straight, then $(\TT_N, E_N')$ and $(\TTT_t, E')$ are also an $(\ep, K)$-nearly circular traintrack (for all $t > 0$). 
Then, since $E_N'$ and $E_t'$ are continuous also in the intersection of vertical edges of adjacent branches,  by Lemma \ref{lem:CompareBranches} and Lemma \ref{alpha} we have, 
\begin{proposition}\Label{p:phi}
There is $H > 0$ such that, for every $\ep > 0$,  if $T$ is sufficiently straight, then for sufficiently large $t > 0$,  the  map $\phi\col (\TT_N, E_N') \to (\TTT_t, E_t')$ given by \ref{defRectangleMap}, is a piecewise-differentiable homeomorphism on $|\TT_N|$ and $(1 + \ep, H)$-bilipschitz on each branch; 
moreover $\phi$ is $\ep$-close to $\xi\col (\TT_N, E_N) \to (\TTT_t, E_t)$ piecewise smoothly on each branch. 
\end{proposition}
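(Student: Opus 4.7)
The plan is to define $\phi$ branchwise: on each pair of corresponding branches $(\RR_i, \RRR_i)$ of $\TT_N$ and $\TTT_t$, take $\phi|_{\RR_i}$ to be the map given by the construction of \S\ref{defRectangleMap}, but using the modified Euclidean metrics $E_N'$ and $E_t'$ in place of the canonical ones $E_N$, $E_t$. Since the modification $\eta \ci \gam$ of \S\ref{s:ModityEuclidean} was built precisely so that it fixes the shared boundary circle $\ell$ of adjacent round cylinders pointwise, the metric $E_N'$ glues continuously across every vertical edge separating adjacent branches of $\TT_N$, and similarly for $E_t'$. The branch-wise maps in \S\ref{defRectangleMap} take vertical leaves to vertical leaves linearly and are linear in the horizontal coordinate; together with the continuity of $E_N'$ and $E_t'$ at vertical edges, this forces the assembled map $\phi\col (\TT_N, E_N') \to (\TTT_t, E_t')$ to match across vertical edges. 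Thus $\phi$ is a piecewise-differentiable homeomorphism on all of $|\TT_N|$.

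For the $(1 + \ep, H)$-bilipschitz bound on each branch, the strategy is to verify the hypotheses of Lemma \ref{lem:CompareBranches} with respect to the modified metrics. Condition (1), that each branch is $(\del, K)$-nearly circular, follows from Proposition \ref{pModifyT}: if $T$ is sufficiently straight, then $(\TT_N, E_N')$ and $(\TTT_t, E_t')$ remain nearly circular because the change of metric is only a small piecewise-smooth perturbation. Condition (2), that the cores of the supporting round cylinders have nearly equal length, is preserved under the M\"obius factor $\gam$ (which is an isometry of canonical Euclidean metrics) and under post-composition by $\eta$ (Lemma \ref{alpha}). Condition (3) follows from the analysis of \S\ref{sMapOnBranches}: by Lemma \ref{lem:approx0} the weights of $L + N$ and $tM$ on corresponding branches of $T$ differ by at most the universal constant $D$, and for $t > 0$ sufficiently large both weights exceed $1/\ep$; these bounds on vertical leaf lengths are preserved, up to a uniform additive and small multiplicative error, when passing from $E_N, E_t$ to $E_N', E_t'$ via Proposition \ref{pModifyT}. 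The constant $H$ will be a fixed multiple of $\max(K, D)$, independent of $\ep$ and $t$.

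The closeness of $\phi$ to $\xi$ piecewise smoothly on each branch is a direct corollary of Lemma \ref{alpha}: on each round cylinder the correction $\eta \ci \gam$ is $\ep$-close to the identity in the $C^1$ sense, so replacing the canonical metric with the modified one perturbs the branch-wise linear map by an amount that is $\ep$-small piecewise smoothly. Combining this with the branch-wise bilipschitz estimate gives all three assertions of the proposition.

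The main obstacle, and the reason the modification of Euclidean metrics in \S\ref{s:ModityEuclidean} was needed at all, is continuity of $\phi$ at vertical edges where adjacent round cylinders are only nearly concentric rather than concentric. Once that is settled by the construction of $E_N', E_t'$, the remaining estimates reduce to bookkeeping among Lemma \ref{lem:CompareBranches}, Lemma \ref{alpha}, and Proposition \ref{pModifyT}, with the key quantitative inputs (bounds on core lengths, near-circularity, and the weights of $L + N$ versus $tM$) already established.
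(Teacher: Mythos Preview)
Your proposal is correct and follows essentially the same approach as the paper. The paper's own justification is in fact just the one sentence preceding the proposition: ``since $E_N'$ and $E_t'$ are continuous also in the intersection of vertical edges of adjacent branches, by Lemma \ref{lem:CompareBranches} and Lemma \ref{alpha} we have [the proposition]''; your write-up simply unpacks this, verifying the hypotheses of Lemma \ref{lem:CompareBranches} in the modified metrics via Proposition \ref{pModifyT} and Lemma \ref{alpha}, exactly as the paper intends.
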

Clearly, in this proposition, we may in addition assume that $\phi$ is $(1 + \ep)$-quasiconformal.

\subsection{Step 3: Gluing the maps between traintracks and their complements}

Notice that $\bd \TT_N$ enjoys different metrics induced from the Euclidean metric $E_N'$ from $\TT_N$ and from the hyperbolic metric in the complement $C_N \minus \TT_N$. 
Similarly $\bd \TTT_t$ enjoys different metrics from  the Euclidean metric $E_t'$ on $\TTT_t$ and from the hyperbolic metric in $C_t \minus \TTT_t$.
\begin{proposition}\label{lem:bdry}
For ever $\ep > 0$, if $T$ is sufficiently straight, then 
$\bdr \phi  \col \bdr \TT_N \to \bdr \TTT_t$ is $(1 + \ep, \ep)$-bilipschitz with respect to the hyperbolic metric in the complement of the traintracks.
\end{proposition}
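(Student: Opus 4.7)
My plan is to prove the proposition by reducing the hyperbolic comparison on $\bd\TT_N$ and $\bd\TTT_t$ to the Euclidean comparison already achieved in Proposition \ref{p:phi}. The key step is to show that on each horizontal edge $e$ of a branch $\RR$ of $\TT_N$, the hyperbolic metric inherited from $C_N \minus |\TT_N|$ and the modified Euclidean metric $E_N'$ are mutually $(1+\ep,\ep)$-bilipschitz, with the analogous statement on $\bd\TTT_t$. Once this is established, the required bilipschitz bound for $\bdr\phi$ follows directly from the Euclidean bilipschitz property of $\phi$.

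For the hyperbolic side, the collapsing map $\kap_N\col C_N \to \tau$ is an isometry on $C_N \minus |\TT_N|$ and, by Proposition \ref{p:TN}(1), sends $e$ to a curve $\ep$-close, up to an $\ep$-small isotopy of vertical edges, to a horizontal edge of the corresponding branch $R$ of $T$. Since $T$ is $(\ep,K)$-nearly straight, $R$ is smoothly $(1+\ep)$-rough isometric to a Euclidean rectangle of some width $W \geq K$; hence $\kap_N(e)$, and therefore $e$ itself in the complementary hyperbolic metric, is smoothly $(1+\ep,\ep)$-bilipschitz to a straight Euclidean segment of length $W$.

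For the Euclidean side, Proposition \ref{pModifyT} shows $E_N'$ is $(1+\ep,\ep)$-bilipschitz to the canonical Euclidean metric $E_N$ induced from the supporting round cylinder $\AA$. The near-circularity of $\RR$ means that $e$ lies in a circular rectangle of height $\leq \ep$ and meets each vertical leaf of $\AA$ at an angle $\ep$-close to $\pi/2$; in the canonical Euclidean coordinates on $\AA$, this exhibits $e$ as a nearly-horizontal graph over the axis, whose $E_N$-arclength is smoothly $(1+\ep)$-close to its horizontal projection. Since $e$ crosses $\AA$ from one boundary circle to the other, this horizontal projection has length equal to the core length of $\AA$, which by Theorem \ref{ThmTraintrack}(1-a) is $\ep$-close to $W$. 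Combined with the previous paragraph, both the hyperbolic metric and $E_N'$ on $e$ are smoothly $(1+\ep,\ep)$-bilipschitz to the same straight Euclidean segment of length $W$, hence to each other; the same argument on $C_t$ gives the analogous comparison on $\bd\TTT_t$.

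Finally, $\phi$ maps each horizontal edge $e$ of $\TT_N$ linearly, in the Euclidean coordinates $E_N'$ and $E_t'$, onto the corresponding horizontal edge $e'$ of $\TTT_t$, and both $e$ and $e'$ have Euclidean length $(1+\ep,\ep)$-close to the same width $W\geq K$. Therefore $\phi|_e$ is $(1+\ep)$-bilipschitz in the Euclidean metrics—the additive constant $H$ in Proposition \ref{p:phi} arose only from the vertical direction where the weights grow with $t$, whereas horizontal widths are bounded both above and below by $K$. Composing with the bilipschitz comparisons of Euclidean and hyperbolic metrics on both sides yields that $\bdr\phi$ is $(1+\ep,\ep)$-bilipschitz on each horizontal edge, and hence on all of $\bdr\TT_N$ since $\phi$ is continuous and carries switch points to switch points. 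The main technical obstacle is the infinitesimal (not merely total-length) comparison of the Euclidean and hyperbolic metrics on horizontal edges, which requires the near-straightness of $T$ in $\tau$ and the near-circularity of $\TT_N$ in $C_N$ to collaborate in placing both metrics smoothly close to a common Euclidean rectangle model.
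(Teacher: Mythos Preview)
Your overall strategy---reduce the hyperbolic comparison on $\bd\TT_N$ and $\bd\TTT_t$ to the Euclidean one already controlled by $\phi$---matches the paper's, and your handling of the multiplicative $(1+\ep)$-bilipschitz bound on horizontal edges is close in spirit to the first half of the paper's proof. However, there is a genuine gap in the step where you conclude that the hyperbolic metric and $E_N'$ on a horizontal edge $e$ are $(1+\ep,\ep)$-bilipschitz \emph{to each other via the identity map on $e$}. You compare the hyperbolic metric to a segment of length $W$ via the collapsing map $\kap_N$ into $\tau$, and the Euclidean metric to a segment of (approximately) the same length via the horizontal projection to the core of the supporting cylinder $\AA$. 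These are two \emph{different} parametrizations of $e$ by $[0,W]$. Knowing that each is individually a $(1+\ep,\ep)$-bilipschitz map onto a segment, and that the total lengths agree to within $\ep$, does not imply that the identity on $e$ is $(1+\ep,\ep)$-bilipschitz between the two metrics: one still needs the two parametrizations to agree \emph{pointwise} up to $\ep$. The multiplicative bound alone only yields an additive error of order $\ep\cdot W$, and no uniform upper bound on the branch widths $W$ is available as $T$ is made straighter.

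The paper supplies exactly this missing link through the three-dimensional picture. Via the pleated surface $\beta_N\col\H^2\to\H^3$ and Proposition~\ref{CloseToCore}, the isometric image $\beta_N\circ\til\kap_N(e)$ of $(e,d_{\text{hyp}})$ in $\H^3$ is shown to be $\ep$-close to the core of $\AA$, and the nearest-point projection from $\RS$ to the hyperbolic plane carrying the stratum identifies the Thurston (hyperbolic) metric on $e$ with the intrinsic $\H^3$-metric on that image. This directly ties the hyperbolic arclength on $e$ to the horizontal Euclidean distance $H_N$ pointwise, not just in total length. For the $\ep$-rough isometry of $\bdr\phi$ itself the paper goes further still: it uses that $\bd\TT_N$ and $\bd\TTT_t$ are isomorphic as projective surfaces (both being copies of the boundary of $\tau\minus T$) to place the supporting cylinders $\AA_N$ and $\AAA_t$ simultaneously on $\RS$ so that the corresponding horizontal edges overlap in a common arc $h$, shows their cores $c_N,c_t$ are $\ep$-close geodesic segments in $\H^3$, and deduces that for each $p\in h$ the projections $\zeta_N(p)$ and $\zeta_t(p)$ are $\ep$-close and that $\xi_\ast$ is $\ep$-close to the identity. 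Your purely length-based comparison cannot recover this pointwise control.
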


\begin{proof}
By Proposition \ref{p:phi},
$\xi$ and $\phi$ are piecewise smoothly $\ep$-close. 
Thus it suffices to show $\bd \xi \col \bd\TT_N  \to \bd \TTT_t$ is $(1 + \ep, \ep)$-bilipschitz map. 

We first show that $\bd \xi \col \bd \TT_N \to \bd\TTT_t$ is $(1 + \ep)$-bilipschitz with respect to the complementary hyperbolic metrics. 
Let $\RR$ be a branch of the universal cover $\til{\TT}_N$ supported on a round cylinder $\AA$.
Let $\zeta_N\col \AA \to c$ be the projection into the core $c$ of $\AA$. 
Let $H_N$ denote the Euclidean distance in the horizontal direction on a branch $\RR$, so that  $\zeta_N$ is isometric with $H_N$ on $\AA$ and the induced hyperbolic metric on $c$.
Then, if $T$ is sufficiently straight, since  each horizontal edge $e$ of $\RR$ is nearly orthogonal to the vertical foliation on $\AA$,  $H_N$ on $C$ is $(1 + \ep, \ep)$-bilipschitz to the metric induced from the Euclidean metric $E_N$ on $\RR$. 

Let $\beta_N\col \H^2 \to \H^3$ be the pleated surface for $C_N$ equivariant under $\hol (C_N)$.
Recall that  $\kap_N\col C_N \to \tau$ is the composition of  the collapsing maps  $C_N \to C$ and $C \to \tau$. 
Let $\til{\kap}_N\col \til{C} \to \h^2$ be the lift to a map between their universal covers.
Then the restriction of  $\beta_N \ci  \til{\kap}_N\col \til{C}_N \to \H^3$ to $e$ is a smooth $(1 + \ep)$-bilipschitz curve, and  it is smoothly $\ep$-close to the core of $\AA$ (Proposition \ref{CloseToCore}). 
Let  $\mathcal{Y}$  be the stratum of $(\til{C}_N, \til{\LL})$ containing $e$. 
Then $\til{\kap}_N$ takes $\mathcal{Y}$ to a stratum $Y$ of  $(\H^2, \til{L})$.
Note that $dev(C)$ embeds $\mathcal{Y}$ into $\RS$.
Then $\beta(Y)$ is contained in a hyperbolic plane in $\h^3$ so that there the nearest point projection onto the hyperbolic plane inducing an isometry $\eta\col \mathcal{Y} \to \beta(Y)$ with respect to the Thurston metric on  $\mathcal{Y}$.  
In particular $\eta| e$ is  an isometric embedding. 
Therefore, since $\beta_N \ci \til{\kap}_N (e)$ is sufficiently close to the core of $\AA$, if $T$ is sufficiently straight, then, for each horizontal edge $e$ of $\TT_N$,   we see that the identity map from $e$ with $H_N$ to $e$ with the complementary hyperbolic metric is $(1 + \ep)$-bilipschitz.

Similarly, if $T$ is sufficiently straight,  for each horizontal edge of $\TTT_t$, the horizontal  Euclidean distance given by $E_t$ is $(1 + \ep)$-bilipschitz to the complementary hyperbolic metric.

\begin{figure}
\begin{overpic}[scale=.3
] {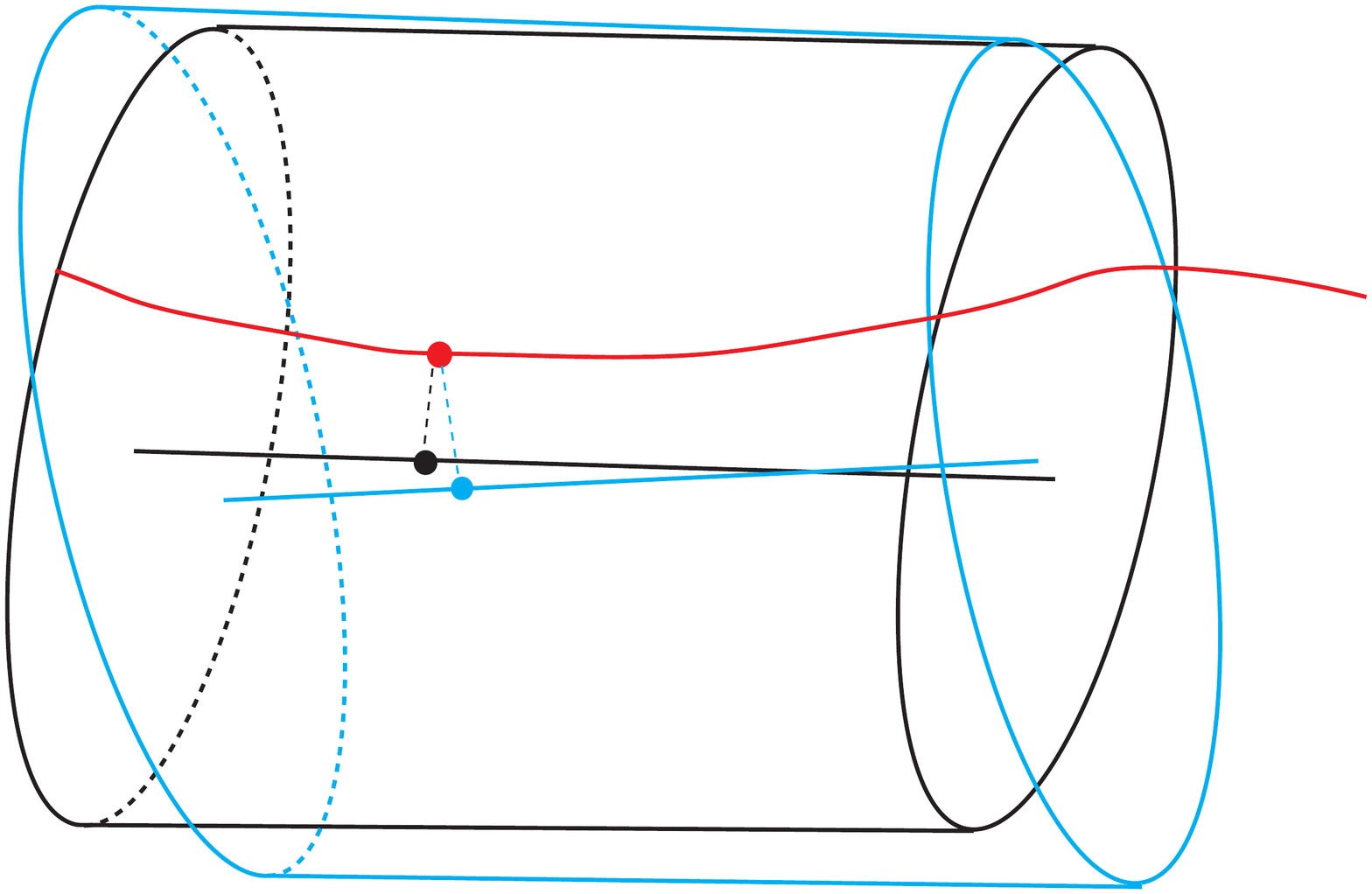} 
\put(40, 50){$\textcolor{red}{p}$}
\put(50, 50){$\textcolor{red}{\bd \TT_N}$}
\put(50, 35){$c_N$}
\put(35, 35){$\textcolor{Cyan}{c_t}$}
      \end{overpic}
\caption{}\label{}
\end{figure}

Recall that $\xi_i \col \RR_i \to \RRR_i$ induces a linear map $\xi_{i\, \ast}$ in their horizontal coordinates, and, for every $\ep > 0$, if $T$ is sufficiently straight, all $\xi_{i\,\ast}$ is $(1 + \ep, \ep)$-bilipschitz with the horizontal distances given by $E_N$ and $E_t$.  
Since we have seen the horizontal Euclidean distances are bilipschitz to the complementary hyperbolic metrics, hence, if $T$ is sufficiently straight,  for each $\xi_i \col \RR_i \to \RRR_i$ and each horizontal edge $e$ of $\RR_i$, we see that $\xi_i | e$ is $(1 + \ep)$-bilipschitz with the complementary hyperbolic metrics.

Next we show that $\bdr \xi \col \bdr \TT_N \to \bdr \TTT_t$ is an $\ep$-rough isometry on each horizontal edge (of each branch). 
The collapsing maps $\kap_N \col C_N \to \tau$ and $\kap_t \col C_t \to \tau$ respectively take 
$\TT_N $ and $\TTT_t $ to a single nearly straight traintrack $T$ on $\tau$ up to small homotopies of vertical edges.
 Suppose that $e_N$ and $e_t$ are corresponding horizontal edges  of $\TT_N$ and $\TTT_t$ that descends to a common edge $e$ of $T$.
Then  $\kap_N\col C_N \to \tau$ and $\kap_t\col C_t \to \tau$ takes $e_N$ and $e_t$ to $e$ up to $\ep$-small isotopy  (Theorem \ref{ThmTraintrack}). 
Recall that $\bdr \TT_N \cong \bdr \TTT_t$  as projective structures. 
Then with respect to this identification, if $T$ is sufficiently straight, then $e_N$ and $e_t$ intersects in the curve $e_N \cap e_t  =: h$ and the endpoints of $h$ are $\ep$ close to the endpoints of $e_N$ and $e_t$. 
 
Let $\AA_N$ and $\AAA_t$ be the round cylinders supporting $e_N$ and $e_t$, so that  $e_N \sub \AA_N$ and $e_t \sub \AAA_t$. 
Thus place the supporting cylinders $\AA_N$ and $\AAA_t$ on $\RS$ so that it realizes the  intersection of  $e_N$ and $e_t$ in $h$.  
In addition, since $\TT_N$ and $\TTT_t$ are $(\ep, K)$-nearly circular, $e_N$ is contained in a circular rectangle supported on $\AA_N$ of height less than $\ep$, and $e_t$ is contained in a circular rectangle of height less than $\ep$ that is supported on $\AAA_t$.
Therefore, if $T$ is sufficiently straight,  the cores of  $\AA_N$ and $\AAA_t$  are $\ep$-close in $\H^3$.

Since $h$ is $\ep$-close to $e_N$ and $e_t$,  it suffuses to show $\bdr \xi$ is $\ep$-rough isometry  on $h$. 
Let $c_t \sub \H^3$ be the  core of $\AAA_t$, and let $\zeta_t\col \AAA_t \to c_t$ be the projection along geodesics in $\H^3$.
Similarly let $c_N \sub \H^3$ be the core of $\AA_N$, and let $\zeta_N\col \AA_N \to c_N$ be the projection along geodesics. 
Since $c_N$ and $c_t$ are $\ep$-close  in $\H^3$,  if $T$ is sufficiently straight, then for every $p \in h$, $\zeta_t(p)$ and $\zeta_N(p)$ are $\ep$-close in $\H^3$. 
In addition the linear map $\xi_\ast \col c_t \to c_N$ is $\ep$-close to the identity. 
Therefore, since  $\xi_\ast$ is an $\ep$-rough isometry, if $T$ is sufficiently straight,   $\xi_\ast$ takes $\zeta_t(p)$ to a point $\ep$-close to $\zeta_N(p)$ for each $p \in h$.
Since the hyperbolic metrics are $\ep$-rough isometric to the horizontal metrics,  $\xi$ is $\ep$-rough isometric  on $h$ with the complementary hyperbolic metrics. 
\end{proof}

\subsubsection{Matching $\phi$ and $\psi$ along the boundary of the traintrack.} 

Although $\phi\col \TT_N \to \TTT_t$ and $\psi\col C_N \minus \TT \to C_t \minus \TTT$ do not coincide along the boundary of $\TT_N$,
however by Proposition \ref{lem:bdry}, for every $\ep > 0$, if $T$ is sufficiently straight, then $\psi \circ \phi^{-1}$, defined on $\bd \TTT_t$, is $(1 + \ep, \ep)$-bilipschitz map  (with either metrics). 
Thus,  for sufficiently large $t > 0$, we  modify $\phi$ so that $\phi$ and $\psi$ agree along $\bd \TT$ by post-composing $\phi$ with $(1 + \ep, \ep)$-bilipschitz map.

Let $\NNN(= \NNN_t)$ be the {\it  unit collar} of $\bdr \TTT_t$ in $\TTT_t$. 
That is, the union of  the $1$-neighborhoods of the horizontal edges in the branches of $\TTT_t$ (Figure \ref{Hcollar}) with respect to the modified euclidean metric $E'_t$ included by concentric cylinders on $\RS$ (\S\ref{s:ModityEuclidean}). \\

Recall that if $t > 0$ is sufficiently large, then  all branches of $\TTT_t$ has height more than $2$. 
Then $\NNN$ is a union of finitely many Euclidean cylinders and rectangles of height $1$ that are disjoint except at vertices on the switch points on $\TTT_t$.
\newcommand{\XXX}{\mathsf{X}}
Thus set $\NNN = \cup_j \XXX_j$, where $\XXX_j$ are the Euclidean cylinders and rectangles.

\begin{figure}[H]
\begin{overpic}[scale=.25
] {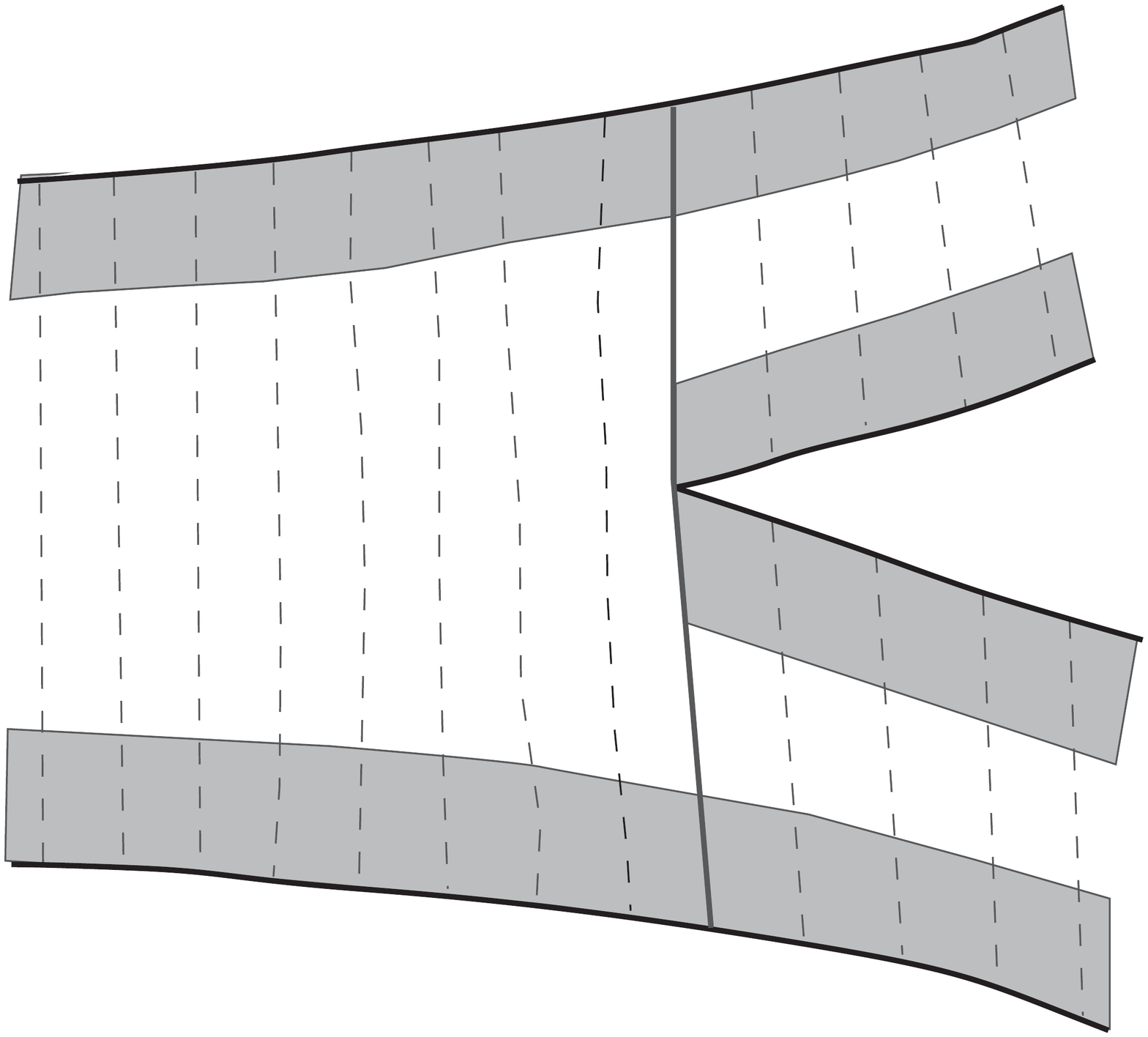} 
      \end{overpic}
\caption{}\label{Hcollar}
\end{figure}

If $\XXX_j$ is a rectangle, let $a_j$ be its horizontal edge contained in in the interior of $\TTT_t$ and $b_j$ be its horizontal edge in the boundary of $\TTT_t$.  
Similarly, if $\XXX_j$ is a cylinder, let $a_j$ be  its boundary circle contained in the interior of $\TTT_t$ and $b_j$ be the other boundary circle contained in the boundary of $\TTT_t$.
Note that  $\psi \circ \phi^{-1} \col \TTT_t \to \TTT_t$ preserves each $b_j$.

Since  $\psi \circ \phi^{-1}$ on $\bd \TTT_t$ is a $(1 + \ep, \ep)$-bilipschitz map, 
 we can construct a desired bilipschitz map  supported on $\NNN$:
The vertical foliation of $\TTT_t$ induces a vertical foliation on each $\XXX_j$, each leaf of which connects a point on $a_j$ and a point on  $b_j$.
Then there is a  unique homeomorphism $\eta_j\col \XXX_j \to \XXX_j$ such that

\begin{itemize}
\item $\eta_j$ is the identity map on $a_j$,
\item $\eta_j$ is $\psi \circ \phi^{-1}$ on $b_j$, and
\item $\eta_j$ is linear on each vertical leaf. 
\end{itemize}
Since, $\psi \circ \phi^{-1}$ on $b_j$ is a piecewise differentiable homeomorphism,  so is $\eta_j$.
Then, for every $\ep > 0$, if $T$ is sufficiently straight,   then for sufficiently large $t > 0$, $\eta_j \col X_j \to X_j$ are $(1+\ep, \ep)$-bilipschitz. 
Let $\eta\col \TT_N \to \TTT_t$ be the homeomorphism such that $\eta = \eta_j$ on each $X_j$ and the identity map on $\TTT_t \minus \NNN$. 
Then $\eta$ is $(1 + \ep, \ep)$-bilipschitz and $\eta \circ \phi = \psi$ on $\bdr \TT_N$. \\

Since $\eta \ci \phi \col \TT_N \to \TTT_t$ and $\psi\col C_N \minus \TT_N \to C_t \minus \TTT_t$ are both $(1 + \ep, \ep)$-bilipschitz,  we obtain a desired $(1 + \ep, \ep)$-bilipschitz map $C_N \to C_t$ for sufficiently large $t > 0$.

\bibliographystyle{alpha}

\bibliography{/Users/shinpeibaba/Dropbox/MyPaper/Reference.bib}

\end{document}